\newtheorem{theorem}{Theorem}[section]
\newtheorem{lemma}[theorem]{Lemma}
\newtheorem{proposition}[theorem]{Proposition}
\newtheorem{corollary}[theorem]{Corollary }
\theoremstyle{definition}
\newtheorem{definition}[theorem]{Definition}
\newtheorem{remark}[theorem]{Remark}
\newtheorem{hypothesis}[theorem]{Hypothesis}
\numberwithin{equation}{theorem}
\newtheorem{example}[theorem]{Example}
 \DeclareMathOperator{\Ext}{Ext}
\DeclareMathOperator{\Hom}{Hom}
\DeclareMathOperator{\gr}{gr}
\DeclareMathOperator{\rees}{Rees}
\DeclareMathOperator{\LH}{LH}
\def\bt{\begin{theorem}}
\def\et{\end{theorem}}
\def\bl{\begin{lemma}}
\def\el{\end{lemma}}
\def\br{\begin{remark}}
\def\er{\end{remark}}
\def\bc{\begin{corollary}}
\def\ec{\end{corollary}}
\begin{document}
\title[Note on Nakayama automorphisms of PBW deformations and Hopf actions]
{Note on Nakayama automorphisms of\\ PBW deformations and Hopf actions
}

\author{Y. Shen}
\address{Department of Mathematics, Zhejiang University,
Hangzhou 310027, China}
\email{11206007@zju.edu.cn}

\author{D.-M. Lu}
\address{Department of Mathematics, Zhejiang University,
Hangzhou 310027, China}
\email{dmlu@zju.edu.cn}



\subjclass[2000]{Primary 16E65, 16S80; Secondary 16W30}
\keywords{Nakayama automorphisms, PBW deformations, Artin-Schelter regular algebras,  Homogenizations, Hopf actions}




\begin{abstract}
PBW deformations of Artin-Schelter regular algebras are skew Calabi-Yau. We prove that the Nakayama automorphisms of such PBW deformations can be obtained from their homogenizations. Some Calabi-Yau properties are generalized without Koszul assumption. We also show that the Nakayama automorphisms of such PBW deformations control Hopf actions on them.
\end{abstract}

\maketitle

\section*{Introduction}
Skew Calabi-Yau algebras, as a generalization of Calabi-Yau algebras, attract lots of researchers to study. The latest paper \cite{RRZ1} introduced a definition of graded skew Calabi-Yau category. Paralleling with Calabi-Yau algebras, \cite[Theorem 3.5]{RRZ1}  presented that certain full triangulated subcategories of derived categories of a large class of algebras, which is called generalized Artin-Schelter Gorenstein algebras, are graded skew Calabi-Yau categories. Such algebras possess a kind of specific automorphisms called the Nakayama automorphisms. This is one of significant features of skew Calabi-Yau algebras.  When restricted on connected graded case, skew Calabi-Yau algebras are just Artin-Schelter regular algebras. The recent papers \cite{CWZ,LMZ} showed the that Nakayama automorphisms control Hopf actions on Artin-Schelter regular algebras.

Since the Nakayama automorphism is a well behaved invariant, how to calculate becomes a primary issue. Fortunately, there are many homological identities to help obtain the Nakayama automorphisms of noetherian Artin-Schelter regular algebras (see \cite{RRZ}, for example). In \cite{LWW}, the authors acquire the Nakayama automorphisms under Ore extensions. In fact, the Nakayama automorphisms of noetherian algebras are related to rigid dualizing complexes. Using the particular dualizing complexes, the paper \cite[Proposition 6.18]{YZ} proved that an algebra whose associated graded algebra is connected noetherian  Artin-Schelter regular for some filtration is endowed with a Nakayama automorphism induced by the one of associated Rees algebra. PBW deformations of noetherian Artin-Schelter regular algebras satisfy those conditions, such as the Weyl algebras and the ungraded Down-Up algebras. The paper \cite{G} told that such PBW deformations are skew Calabi-Yau algebras. In the case of Koszul and low dimension, paper \cite{HVZ} has described the Nakayama automorphisms of such PBW deformations explicitly. Those Nakayama automorphisms are induced by the ones of the Rees algebras, but the Rees algebras are not easy to deal with in general. Our plan from the beginning was to seek a handy method for calculating the Nakayama automorphisms of such PBW deformations.

We realize that homogenizations of PBW deformations will be one of the options that meet our requirements. By using the homogenizations, we give a different way to show, without noetherian assumption, that a PBW deformation of an Artin-Schelter regular algebra is skew Calabi-Yau with the Nakayama automorphism induced from the homogenization (see Theorem \ref{nakayama automorphism of pbw}). The proof does not involve rigid dualizing complexes and localizations,  and it is different from the method for the Rees algebras. This result is available and effective for computation in practice.  Moreover, using the homogenization approach, we generalize some Calabi-Yau properties handily in Corollary \ref{CY properties}.

In recent years, Hopf actions on low dimensional Artin-Schelter regular algebras have been made progress (see \cite{CWZ,LMZ}). An extensive question is Hopf actions on filtered Artin-Schelter regular algebras. The case of $2$-dimension has been done in \cite{CWWZ}. However, it becomes a tough and tedious work even for $3$-dimensional case. Analogous to \cite{LMZ}, we give some conditions to characterize Hopf algebras which act on PBW deformations of noetherian Artin-Schelter regular algebras. It implies their Nakayama automorphisms also control Hopf actions. We achieve it also with the help of homogenization.

Here is an outline of the paper. In Section 1, we recall definitions of skew Calabi-Yau algebras and Artin-Schelter regular algebras and review PBW deformations in briefly. Section 2 is devoted to calculating Nakayama automorphisms of PBW deformations of Artin-Schelter regular algebras. We reprove that the Weyl algebras are Calabi-Yau algebras and obtain the Nakayama automorphisms of ungraded Down-Up algebras and some other examples. We also generalize some Calabi-Yau properties without Koszul hypothesis. We mainly consider Hopf actions and give some conditions to determine whether Hopf algebras are group algebras in Section 3.

Throughout the paper, $k$ is an algebraically closed field of characteristic $0$.  All algebras and Hopf algebras are over $k$. Unless otherwise stated, tensor product $\otimes$ means $\otimes_k$.

\section{Preliminaries}
Let $A$ be an algebra, and let $A^e$ be the enveloping algebra $A\otimes A^o$ where $A^o$ is the opposite algebra of $A$. Let $M$ be a left (resp. right) $A^e$-module, so it is a left and right $A$-module (resp. $A^o$-module). A left $A^e$-module and a right $A^e$-module can be identified because of $A^e\cong (A^e)^o$. For two automorphisms $\mu,\nu$ of $A$, the \emph{twisted module} $^\mu M^\nu$ is defined such that it is just $M$ as $k$-vector spaces, and the module structure becomes $a\ast m\ast b=\mu(a)m\nu(b)$ for any $a,b\in A$ and $m\in M$.

Suppose $A$ is also a graded algebra, that is, $A=\bigoplus_{i\in\mathbb{Z}}A_i$. For any $i\in\mathbb{Z}$, each element in $A_i$ is called \emph{homogeneous element} with degree $i$. We say $A$ is \emph{connected} if $A_i=0$ for all $i<0$ and $A_0=k$.  Let $M$ be a left graded $A$-module. For some integer $i$, \emph{shift} of $M$ by degree $i$ is $M(i)$, defined by $M(i)_j=M_{i+j}$ for any $j$. Let $\sigma$ be a graded automorphism, then \emph{graded twisted algebra} $A^{\sigma}$ is defined such that $A^{\sigma}=A$ as vector spaces, and the multiplication satisfying $a\star b=\sigma^{\deg b}(a)b$.

\begin{definition}
An algebra $A$ is called \emph{skew Calabi-Yau} (\emph{skew CY}, for short) if
\begin{enumerate}
\item $A$ is homologically smooth, that is, $A$ has a finite length projective resolution as left $A^e$-module such that each term is finitely generated;
\item there exists an automorphism $\mu$ of $A$ such that
\begin{equation}\label{skew CY iso}
\Ext^i_{A^e}(A,A^e)\cong
\left\{
\begin{array}{cl}
^1A^\mu&\text{if}\; \;i=d,\\
\;0&\text{if}\;\; i\neq d
\end{array}
\right.
\end{equation}
as left $A^e$-modules, where $1$ denotes the identity map of $A$.
\end{enumerate}
\end{definition}
\begin{remark}
\begin{enumerate}
\item If an algebra $A$ satisfies (\ref{skew CY iso}), then we say $A$ has a \emph{Nakayama automorphism} $\mu$ (always denoted by $\mu_A$). It is unique up to inner automorphisms of $A$ if it exists.

\item Suppose an algebra $A$ has a Nakayama automorphism $\mu_A$. The proof of \cite[Corollary 3.6]{YZ} still works and implies $\mu_A(z)=z$ for any central element $z\in A$.

\item An algebra $A$ is \emph{Calabi-Yau} (CY, for short) if and only if $A$ is skew CY and the identity map is a Nakayama automorphism.

\item If $A$ is a graded algebra, then the definition of skew CY algebra should be in the category of graded $A^e$-modules, and (\ref{skew CY iso}) should be replaced by
\begin{equation*}
\Ext^i_{A^e}(A,A^e)\cong
\left\{
\begin{array}{cl}
^1A^\mu(l)&\text{if}\; \;i=d,\\
\;0&\text{if}\;\; i\neq d
\end{array}
\right.
\end{equation*}
for some $l\in \mathbb{Z}$. The Nakayama automorphism $\mu$ is also a graded isomorphism.
\end{enumerate}
\end{remark}

\begin{definition}
A connected graded algebra $A$ is \emph{Artin-Schelter Gorenstein} (AS-Gorenstein, for short) of dimension $d$ if the following conditions hold:
\begin{enumerate}
\item $A$ has finite injective dimension $d$;
\item $\Ext^d_A(k,A)=k(l)$ and $\Ext^i_A(k,A)=0$ if $i\neq d$ for some $l\in
\mathbb{Z}$. The integer $l$ is called \emph{Gorenstein parameter}.
\item the right version of (b) holds.
\end{enumerate}

In addition, $A$ has finite global dimension $d$, then $A$ is  \emph{Artin-Schelter regular} (AS-regular, for short)
\end{definition}

\begin{theorem}\label{AS-regular iff skew CY}\cite[Lemma 1.2]{RRZ}
Let $A$ be a connected graded algebra. Then $A$ is AS-regular if and only if $A$ is skew CY.
\end{theorem}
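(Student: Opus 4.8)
The plan is to bridge the one-sided homological data that defines AS-regularity (the groups $\Ext^i_A(k,A)$ together with finiteness of the global dimension) and the two-sided data that defines skew CY (homological smoothness together with the groups $\Ext^i_{A^e}(A,A^e)$). The engine of the proof will be a single base-change identity. If $P_\bullet\to A$ is a finite, finitely generated graded projective resolution over $A^e$, then since $A$ is flat one has $k\otimes_A A=k$ with no higher Tor, so $k\otimes_A P_\bullet\to k$ is again a finite, finitely generated graded projective resolution, now of $k$ as a one-sided $A$-module. Tracking the surviving one-sided module structure and comparing the functors $\Hom_{A^e}(-,A^e)$ and $\Hom_A(-,A)$ along this operation, I expect to obtain a natural isomorphism of graded vector spaces
$$\Ext^i_{A^e}(A,A^e)\otimes_A k\;\cong\;\Ext^i_A(k,A)\qquad(i\ge 0),$$
together with its mirror image obtained by exchanging the two sides via $A^e\cong(A^e)^o$.

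First I would settle the finiteness conditions. The base change just described turns a finite resolution of $A$ over $A^e$ into a finite, finitely generated graded projective resolution of $k$ over $A$, so homological smoothness forces $\mathrm{gldim}\,A=\mathrm{pd}_A k<\infty$; this is the easy implication. Conversely, for connected graded $A$ with $\mathrm{gldim}\,A=d<\infty$ I would promote the minimal graded free resolution of $k$ to a finite, finitely generated graded bimodule resolution of $A$ over $A^e$, each syzygy being finitely generated because $A$ is connected graded of finite global dimension; this gives $\mathrm{pd}_{A^e}A=d$ and hence homological smoothness.

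With the bridge in hand, both implications fall out. If $A$ is skew CY with $\Ext^i_{A^e}(A,A^e)$ concentrated in degree $d$ and equal to ${}^1A^\mu(l)$ there, then applying $-\otimes_A k$ and using ${}^1A^\mu(l)\otimes_A k\cong k(l)$ forces $\Ext^i_A(k,A)$ to vanish for $i\neq d$ and to equal $k(l)$ for $i=d$; the mirror computation gives the right-hand Gorenstein condition, and finite injective dimension follows from finite global dimension, so $A$ is AS-regular. Conversely, if $A$ is AS-regular then homological smoothness holds by the previous paragraph, and the bridge shows $\Ext^i_{A^e}(A,A^e)\otimes_A k=0$ for $i\neq d$; the graded Nakayama lemma then yields $\Ext^i_{A^e}(A,A^e)=0$ for $i\neq d$, while in degree $d$ the one-dimensional fibre $k(l)$ forces the bimodule $U:=\Ext^d_{A^e}(A,A^e)$ to be cyclic and free of rank one on each side, whence $U\cong{}^1A^\mu(l)$ for a graded automorphism $\mu$.

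The step I expect to be the main obstacle is establishing the base-change identity cleanly, that is, making precise the comparison between $\Hom_{A^e}(-,A^e)$ and $\Hom_A(-,A)$ while keeping the four module structures on $A^e=A\otimes A^o$ straight, so that the left $A$-action surviving on the degree-$d$ term is exactly the one producing ${}^1A^\mu$ rather than its inverse or a transpose. A secondary subtlety is the final identification: passing from ``$U\otimes_A k$ is one-dimensional in the correct degree'' back to ``$U$ is an invertible bimodule of the form ${}^1A^\mu(l)$'' requires the graded Nakayama lemma together with the fact that an invertible graded $A$-bimodule which is free of rank one as a one-sided module must be a twist of $A$ by a graded automorphism.
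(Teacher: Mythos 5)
The paper does not prove this statement at all: it is quoted verbatim from \cite[Lemma 1.2]{RRZ}, so there is no in-paper argument to compare yours against. Your plan is essentially the standard proof from that source (and from Yekutieli--Zhang and Van den Bergh): base change along $-\otimes_A k$ to relate the bimodule Ext of $A$ to the one-sided Ext of $k$, plus matching homological smoothness with finite global dimension via the minimal free resolution of $k$.

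Two points need repair before this becomes a proof. First, the ``bridge'' as you state it, $\Ext^i_{A^e}(A,A^e)\otimes_A k\cong\Ext^i_A(k,A)$ for every $i$, is not true at the level of cohomology; what base change actually gives is an isomorphism of complexes $\Hom_{A^e}(P_\bullet,A^e)\otimes_A k\cong\Hom_A(P_\bullet\otimes_A k,A)$, hence $\Rhom_{A^e}(A,A^e)\otimes_A^{\mathbf{L}}k\cong\Rhom_A(k,A)$, and passing to cohomology costs you a universal-coefficients spectral sequence $\Tor^A_p\bigl(\Ext^q_{A^e}(A,A^e),k\bigr)\Rightarrow\Ext^{q-p}_A(k,A)$. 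In the direction skew CY $\Rightarrow$ AS-regular this collapses because ${}^1A^\mu(l)$ is free as a one-sided module, so your argument is fine there; but in the direction AS-regular $\Rightarrow$ skew CY you cannot conclude $\Ext^i_{A^e}(A,A^e)\otimes_A k=0$ for $i\neq d$ merely from the vanishing of the abutment, so the appeal to graded Nakayama is premature. The standard fix is to replace $\Hom_{A^e}(P_\bullet,A^e)$ by a minimal bounded complex of finitely generated graded free one-sided modules and apply graded Nakayama to the whole complex, using both the left and the right derived fibres (i.e.\ both Gorenstein conditions), to see that it is quasi-isomorphic to a single rank-one free module in cohomological degree $d$; only then does the identification of $\Ext^d_{A^e}(A,A^e)$ with ${}^1A^\mu(l)$ go through. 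Second, your smoothness step silently needs $A$ to be locally finite, so that the spaces $\Tor^A_i(k,k)$ governing the ranks of the minimal bimodule resolution are finite dimensional; this is part of the standing conventions in \cite{RRZ} and should be made explicit in the hypotheses.
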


The theorem shows that AS-regular algebras are naturally endowed with Nakayama automorphisms. Actually, any noetherian AS-Gorenstein algebras have Nakayama automorphisms which can be characterized by balanced dualizing complexes. Van den Bergh introduced rigid dualizing complexes in \cite{VDB} especially for ungraded noetherian algebras. It is easy to check that a noetherian algebra $A$ has finite injective dimension with Nakayama automorphism $\mu_A$ if and only if $A$ has a rigid dualizing complex $^1\!A\,^{\mu_A^{-1}}[d]$ for some integer $d$ (consider $^1\!A\,^{\mu_A^{-1}}$ as a complex concentrated in degree $0$. The symbol $[\;\; ]$ means a shift of complex).

Now we review the definition of PBW deformations. Let $A=k\langle X\rangle/(R)$ be a connected graded $k$-algebra generated by degree $1$, where $X=\{x_1,x_2,\cdots,x_n\}$ and $R=\{r_1,r_2,\cdots,r_s\}$. Let $X^*$ be the free monoid generated by $X$ including $1$. Define a new algebra $U=k\langle X\rangle/(P)$, where
$$
P=\{r_i+r'_i\,|\,r'_i\in k\langle X\rangle \text{ and }\deg r'_i<\deg r_i,\ i=1,\cdots,s\}.
$$

There is a canonical filtration on $U$ induced by degree (or length) of monomials in $X^*$. Let $$F_i\,U=\frac{F_ik\langle X\rangle+(P)}{(P)}\,,$$ where $F_ik\langle X\rangle=\text{Span}_k\{u\in X^*\,|\,\deg u\leq i\}$ if $i\geq0$, and $F_i\,U=0$ if $i<0$. The associated graded algebra $\operatorname{gr}U=\bigoplus_{i\in\mathbb{Z}} F_i\,U/ F_{i-1} U$, and the associated Rees algebra $\rees_FU=\bigoplus_{i\geq0}F_iUt^i$ . It is well known $\gr U\cong\rees_FU/(t)$ and $U\cong\rees_FU/(1-t)$. And we have a natural graded surjective homomorphism
$$
\phi: A\to \operatorname{gr}U.
$$

\begin{definition}
Retain the notations above. We say $U$ is a \emph{PBW deformation} of $A$ if $\phi$ is a graded isomorphism.
\end{definition}

Papers \cite{BG,FV0} presented the necessary and sufficient conditions on whether an algebra is a PBW deformation of Koszul algebra and of $N$-Koszul algebra respectively. In general, we can describe it by Gr\"obner basis. Let $x_1<x_2<\cdots<x_n$. Fix the ordering $<$ on $X^*$ to be deg-lex order, that is, $u<v$ if $\deg u<\deg v$ or $\deg u=\deg v$ and there exist factorizations $u=rx_is, v=rx_jt$ such that $i<j$ where $r,s,t\in X^*$ for any $u,v\in X^*$. Let $f=\sum_{i=1}^s f_i\in k\langle X\rangle$ where each nonzero $f_i$ is a homogeneous polynomial and $\deg f_1<\deg f_2<\cdots<\deg f_s$, we call $f_s$ the \emph{leading homogeneous polynomial} of $f$, which is denoted by $\LH(f)$. And for any set $S$ of polynomials, $\LH(S)=\{\LH(f)\,|\,f\in S\}$. Then $\gr U\cong k\langle X\rangle/(\operatorname{LH}(G))$ by \cite[Chapter 4, Theorem 2.3]{LHS}, where $G$ is the Gr\"obner basis of $(P)$ with respect to $<$. In other words, $U$ is a PBW deformation of $A$ if and only if $\LH(G)$ is a Gr\"obner basis of $(R)$ where $G$ is the Gr\"obner basis of $(P)$.

The main property we needed for PBW deformations is the following result.

\begin{theorem}\label{pbw deformation is skew CY}\cite[Corollary 3.5]{G}
Let $A$ be a connected graded noetherian skew CY algebra, and let $U$ be a PBW deformation of $A$. Then $U$ is a skew CY algebra.
\end{theorem}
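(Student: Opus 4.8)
The plan is to prove the theorem through the Rees algebra $B=\rees_F U$, which interpolates between $A$ and $U$. By the facts recalled just above, $t$ is a central, regular, degree-one element of the connected graded algebra $B$ with $B/(t)\cong\gr U\cong A$ and $B/(1-t)\cong U$. So I would first transfer the skew CY property \emph{up} from $A$ to $B$ inside the graded world, and then descend it \emph{down} from $B$ to $U$ across the central regular element $1-t$. Write $d$ for the global dimension of $A$.

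First, since $A$ is noetherian and $t$ is central with $B/(t)\cong A$, the algebra $B$ is noetherian by the standard lifting of the noetherian property across a central element in a connected graded algebra. Next I would show $B$ is AS-regular. From the short exact sequence of graded $B$-bimodules
\[
0\longrightarrow B(-1)\xrightarrow{\ \cdot t\ } B\longrightarrow A\longrightarrow 0,
\]
together with the finiteness of the global dimension of $A$, one deduces that $B$ has finite global dimension equal to $d+1$. The AS-Gorenstein condition for $B$ then follows by feeding the AS condition for $A$ into the change-of-rings spectral sequence $\Ext^p_A(k,\Ext^q_B(A,B))\Rightarrow\Ext^{p+q}_B(k,B)$ and its right-hand analogue: since $\Ext^\bullet_B(A,B)$ is concentrated in a single homological degree (the sequence above gives $\Ext^q_B(A,B)=0$ for $q\neq 1$ and $\Ext^1_B(A,B)\cong A(1)$), the spectral sequence collapses and yields $\Ext^i_B(k,B)=0$ for $i\neq d+1$ and $\Ext^{d+1}_B(k,B)\cong k(l')$ in the top degree. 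Hence $B$ is AS-regular, so by Theorem \ref{AS-regular iff skew CY} it is graded skew CY with a Nakayama automorphism $\mu_B$; since $t$ is central, $\mu_B(t)=t$ by item (b) of the Remark above.

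It remains to descend to $U=B/(1-t)$, and this is the step I expect to be the main obstacle. Homological smoothness of $U$ is obtained from that of $A=\gr U$: a finite, finitely generated projective resolution of $A$ over $A^e$ lifts, through the canonical filtration, to such a resolution of $U$ over $U^e$, using that $\gr(U^e)\cong A^e$ and that every filtered piece $F_iU$ is finite dimensional, which is a consequence of the PBW isomorphism $\phi$. For the Ext computation I would use that $1-t$ is central and regular in $B$ with $\mu_B(1-t)=1-t$, together with the bimodule presentation $0\to B\xrightarrow{\ \cdot(1-t)\ }B\to U\to 0$, and apply $\Rhom_{B^e}(-,B^e)$ followed by base change along $B^e\to U^e$. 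Because $B$ is skew CY of dimension $d+1$, this forces $\Ext^i_{U^e}(U,U^e)=0$ for $i\neq d$ and $\Ext^{d}_{U^e}(U,U^e)\cong {}^1U^{\mu_U}$, where $\mu_U$ is the automorphism of $U$ induced by $\mu_B$, well defined precisely because $\mu_B$ fixes $1-t$.

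The delicate points in this last step are, first, identifying $\Ext_{U^e}(U,U^e)$ with the outcome of the $B^e$-computation, which requires controlling the two central elements $(1-t)\otimes 1$ and $1\otimes(1-t)$ in $B^e$ and checking the relevant Tor-vanishing so that base change introduces no higher terms; and second, verifying that the twisted bimodule structure genuinely descends to ${}^1U^{\mu_U}$ with $\mu_U$ a lift of $\mu_A$. Once these are settled, conditions (1) and (2) of the definition of skew CY hold for $U$, completing the proof.
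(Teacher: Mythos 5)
Your route is, up to a change of name, the paper's own: for a PBW deformation the Rees algebra $B=\rees_FU$ is isomorphic to the homogenization $H(U)$ that the paper works with (this is the regular central extension theorem of Cassidy--Shelton, the paper's reference \cite[Theorem 1.3]{CS}), so your three stages --- lift to a graded central extension with $B/(t)\cong A$ and $B/(1-t)\cong U$, prove $B$ is AS-regular of dimension $d+1$ and hence graded skew CY via Theorem~\ref{AS-regular iff skew CY}, and transfer homological smoothness from $\gr U\cong A$ to $U$ along the canonical filtration using $\gr(U^e)\cong A^e$ --- are exactly those of Theorem~\ref{nakayama automorphism of pbw}. (For the AS-regularity of $B$ the paper simply cites \cite[Lemma 3.5]{ZZ1} instead of running the change-of-rings spectral sequence; your sketch is the standard proof of that lemma. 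The paper's argument also never uses noetherianity, which is why it upgrades the statement.)

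The one substantive item you leave open is precisely the ``delicate'' descent step you flag, and that is where essentially all the work in the paper lives: it is isolated as Lemma~\ref{nakayama automorphism of central}, which says that if $z$ is a regular central element of an algebra $B$ with Nakayama automorphism $\mu_B$ and $B$ is flat over $k[z]$, then $\bar B=B/(z)$ inherits a Nakayama automorphism induced by $\mu_B$, in homological degree one lower. Your two worries are resolved there as follows. The two central elements of $B^e$ are handled by tensoring the length-one free resolutions of $\bar B$ over $B$ and of $\bar B^o$ over $B^o$, which shows $\Ext^i_{B^e}(\bar B^e,B^e)$ is concentrated in degree $2$, where it equals $\bar B^e$; a collapsing spectral sequence then yields the change-of-rings isomorphism $\Ext^i_{\bar B^e}(\bar B,\bar B^e)\cong\Ext^{i+2}_{B^e}(\bar B,B^e)$. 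The right-hand side is computed by a K\"unneth argument over $k[w]$ with $w=z\otimes 1$: flatness of $B$ and $B^e$ over $k[w]$ gives
$\Ext^{d+1}_{B^e}(\bar B,B^e)\cong\Ext^{d}_{B^e}(B,B^e)\otimes_{k[w]}\Ext^1_{k[w]}(k,k[w])\cong{}^1\bar B^{\mu_{\bar B}}$,
and the net shift $(d+1)-2=d-1$ is the expected drop. For $z=1-t$ the required flatness holds because $B$ is $k[1-t]$-torsion-free and $k[1-t]$ is a PID (\cite[Chapter 3, Corollary 3.51]{Ro}). So your outline is the correct one, but as written, ``base change along $B^e\to U^e$ with the relevant Tor-vanishing'' names the problem rather than solves it; the proof is complete only once this lemma is written out.
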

\begin{remark}
The proof of this theorem in \cite{G} depends on rigid dualizing complexes. Hence it should be restricted on noetherian algebras which is lost in the original statement. We will give another proof in Theorem \ref{nakayama automorphism of pbw} without noetherian assumption.
\end{remark}

 \section{Homogenizations and Nakayama automorphisms}\label{homogenization}
In this section, we introduce the definition of homogenization algebras. Then using the homogenization algebras to compute the Nakayama automorphisms of PBW deformations of AS-regular algebras.

Denote the free algebra $k\langle X\rangle$ by $F$ where $X=\{x_1,x_2,\cdots,x_n\}$ with $\deg x_i=1$.  Consider the polynomial extension $F[t]$ of $F$ with $\deg t=1$. Let $f$ be a polynomial of degree $s$ in $F$, then $f=\sum_{i=0}^s f_i$ where each $f_i$ is a homogeneous polynomial of degree $i$. Now we define a corresponding element $f^\mathbf{t}=\sum_{i=0}^sf_it^{s-i}$ in $F[t]$ which is homogeneous in $F[t]$. We say $f^\mathbf{t}$ is the \emph{homogenization} of $f$ and the central element $t$ is the \emph{homogenization element}.
\begin{definition}
Retain the notations above. Let $U=F/(f_1,\cdots,f_m)$ be an algebra, where $f_1,\cdots,f_m\in F$. The graded algebra $H(U)=F[t]/(f_1^\mathbf{t},\cdots,f^\mathbf{t}_m)$ is called the \emph{homogenization} of $U$.
\end{definition}

It is easy to know $H(U)/(1-t)\cong U$. If $U$ is a PBW deformation of a connected graded algebra $A$, then $H(U)/(t)\cong A\cong\gr U$.  Firstly, we show that the Nakayama automorphisms of some algebras can be deduced form their regular central extension.

\begin{lemma}\label{nakayama automorphism of central}
Let $B$ be an algebra with a Nakayama automorphism $\mu_B$. Suppose $z$ is a regular central element in $B$ such that $B$ is a flat module over $k[z]$. Then $B/(z)$ has a Nakayama automorphism $\mu_{B/(z)}$ equals $\mu_B$ when induced on $B/(z)$.
\end{lemma}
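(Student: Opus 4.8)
The plan is to transfer the defining Ext-isomorphism from $B$ to $\overline{B}:=B/(z)$ by means of the regular central element $z$, while tracking the twist carefully. First I would record that $\mu_B(z)=z$ by the noted fact that a Nakayama automorphism fixes every central element; consequently $\mu_B$ preserves the ideal $(z)$ and descends to an automorphism $\overline{\mu}$ of $\overline{B}$. This $\overline{\mu}$ is the candidate, and by construction it is precisely $\mu_B$ induced on $\overline{B}$.

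Next I would compute $\Ext^\bullet_{B^e}(\overline{B},B^e)$ from the given data for $B$. Since $z$ is central, multiplication by $z$ is a $B^e$-linear endomorphism of $B$, giving a short exact sequence of $B^e$-modules $0\to B\xrightarrow{\cdot z}B\to\overline{B}\to 0$. Applying $\Rhom_{B^e}(-,B^e)$ and feeding in $\Ext^i_{B^e}(B,B^e)\cong {}^1B^{\mu_B}$ (nonzero only in degree $d$), the long exact sequence collapses: the relevant connecting map is multiplication by $z$ on ${}^1B^{\mu_B}$, which is injective because $z$ is regular and $\mu_B(z)=z$. Hence $\Ext^i_{B^e}(\overline{B},B^e)=0$ for $i\neq d+1$, while $\Ext^{d+1}_{B^e}(\overline{B},B^e)\cong {}^1B^{\mu_B}/z\,{}^1B^{\mu_B}\cong {}^1\overline{B}^{\,\overline{\mu}}$ as $B^e$-modules (equivalently as $\overline{B}^e$-modules, since $z$ acts as zero).

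It then remains to pass from $B^e$ to $\overline{B}^e$. Writing $z_1=z\otimes 1$ and $z_2=1\otimes z$, these are central in $B^e$, and using the flatness of $B$ over $k[z]$ they form a regular sequence with $B^e/(z_1)\cong\overline{B}\otimes B^o$ and $B^e/(z_1,z_2)\cong\overline{B}^e$. I would then apply the Rees change-of-rings lemma (a dimension shift by one for each central non-zero-divisor) twice: once for $z_1$ over $B^e$ and once for $z_2$ over $\overline{B}\otimes B^o$, in both cases with the module $\overline{B}$, on which $z_1$ and $z_2$ act as zero. This yields natural isomorphisms $\Ext^i_{\overline{B}^e}(\overline{B},\overline{B}^e)\cong\Ext^{i+2}_{B^e}(\overline{B},B^e)$. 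Combining with the previous step, $\Ext^i_{\overline{B}^e}(\overline{B},\overline{B}^e)$ is concentrated in degree $d-1$, where it equals ${}^1\overline{B}^{\,\overline{\mu}}$, so $\overline{B}$ has Nakayama automorphism $\overline{\mu}=\mu_B|_{\overline{B}}$, as claimed.

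The routine input is the two invocations of the Rees lemma. The point that needs genuine care — and where I expect the main obstacle — is the bookkeeping of the twisted bimodule structure through the change-of-rings isomorphisms: one must confirm that the surviving Ext group is exactly the left-untwisted, right-$\overline{\mu}$-twisted bimodule ${}^1\overline{B}^{\,\overline{\mu}}$ and not some other twist, and that the regular-sequence hypotheses (hence the precise role of the $k[z]$-flatness of $B$) are met so that no spurious higher Ext terms appear.
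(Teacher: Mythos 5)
Your proof is correct, and while its overall architecture matches the paper's --- first compute $\Ext^\bullet_{B^e}(\bar{B},B^e)$, then transfer to $\bar{B}^e$ by a dimension shift of two --- both main steps are executed by different means. For the first step the paper realizes $\bar{B}$ as $B\otimes_{k[w]}k$ with $w=z\otimes 1$ and runs a K\"unneth-type spectral sequence, which is exactly where the flatness of $B$ over $k[z]$ enters; your long exact sequence from $0\to B\xrightarrow{\,z\,}B\to\bar{B}\to 0$ is shorter, identifies $\Ext^{d+1}_{B^e}(\bar{B},B^e)$ directly as the cokernel of $z$ acting on ${}^1B^{\mu_B}$, and in fact uses only the regularity of $z$, so your route establishes the lemma under a strictly weaker hypothesis. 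For the second step the paper resolves $\bar{B}^e$ by the tensor product of the two length-one resolutions and then runs a degenerate composite-functor spectral sequence; your two applications of the Rees change-of-rings lemma to $z\otimes 1$ and $1\otimes z$ give the same shift $\Ext^i_{\bar{B}^e}(\bar{B},\bar{B}^e)\cong\Ext^{i+2}_{B^e}(\bar{B},B^e)$ and are really the same mechanism packaged as two single steps. One small correction: the regularity of $z\otimes 1$ on $B^e$ and of $1\otimes z$ on $\bar{B}\otimes B^o$ follows from the regularity of $z$ in $B$ together with $k$ being a field, not from the $k[z]$-flatness, so attributing the regular-sequence property to flatness is harmless but unnecessary. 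Your twist bookkeeping also resolves correctly: since $z$ is central and $\mu_B(z)=z$, left and right multiplication by $z$ agree on ${}^1B^{\mu_B}$, the cokernel is ${}^1\bar{B}^{\,\bar{\mu}}$, and the naturality of the Rees isomorphisms in the second variable introduces no further twist.
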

\begin{proof}
Let $d$ be the integer such that $\Ext^d_{B^e}(B,B^e)\cong ^1\!\!\!B^{\mu_B}$. Write $\bar{B}=B/(z)$. There exists a well-defined automorphism $\mu_{\bar{B}}$ on $\bar{B}$ induced by $\mu_B$ because of $\mu_B(z)=z$. We have a free resolution of $\bar{B}$ as left $B$-modules which is also a resolution as right $B$-modules:
$$
P_{\centerdot}: \quad 0\xrightarrow{\ \ } B\xrightarrow{\,z\,} B\xrightarrow{\ \ } \bar{B}\xrightarrow{\ \ }0.
$$
Similarly, denote by $P^o_{\centerdot}$ the analogous resolution of $\bar{B}^o$ as left $B^o$-modules, and also as right $B^o$-modules. Thus $\Ext^i_B(\bar{B},B)=\Ext^i_{B^o}(\bar{B}^o,B^o)=0$ for all $i\neq 1$, $\Ext^1_B(\bar{B},B)\cong \bar{B}$ as left $B^e$-modules, and $\Ext^1_{B^o}(\bar{B}^o,B^o)\cong \bar{B}^o$ as left $(B^o)^e$-modules.

Notice that $P_{\centerdot}\otimes P^o_{\centerdot}$ is a free resolution of $\bar{B}\otimes \bar{B}^o$ as left $B^e$-modules and a free resolution as right $B^e$-modules. So the spectral sequence implies $\Ext^i_{B^e}(\bar{B}^e,B^e)=0$ if $i\neq 2$ and $\Ext^2_{B^e}(\bar{B}^e,B^e)\cong \bar{B}^e$ as left $(B^e)^e=B^e\otimes (B^e)^o$-modules, also as left ($\bar{B}^e)^e$-modules.

Take a free resolution $Q_{\centerdot}$ of $\bar{B}$ as left $\bar{B}^e$-modules and an injective resolution $I^{\centerdot}$ of $B^e$ as left $(B^e)^e$-modules. The complex $Q_{\centerdot}$ is also quasi-isomorphic to $\bar{B}$ as left $B^e$-modules, and $I^{\centerdot}$ is also an injective resolution of $B^e$ when restricted to left $B^e$-modules. For all $i\geq 0$ and as right $B^e$-modules, also as left $B^e$-modules,
\begin{align*}
\Ext^i_{\bar{B}^e}(\bar{B},\bar{B}^e)&\cong H^i(\Hom_{\bar{B}^e}(Q_{\centerdot},\Ext^2_{B^e}(\bar{B}^e,B^e))\\
                               &\cong H^i(\Hom_{\bar{B}^e}(Q_{\centerdot},H^2(\Hom_{B^e}(\bar{B}^e,I^{\centerdot}))))\\
                               &\cong H^{i+2}(\Hom_{\bar{B}^e}(Q_{\centerdot},\!\Hom_{B^e}(\bar{B}^e,I^{\centerdot})))\\
                               &\cong H^{i+2}(\Hom_{B^e}(\bar{B}^e\otimes_{\bar{B}^e}Q_{\centerdot},I^{\centerdot}))\\
                               &\cong \Ext^{i+2}_{B^e}(\bar{B},B^e).
\end{align*}

Write $w=z\otimes 1\in B^e$.  Then $w$ is a regular central element in $B^e$, so $B^e$ is a $k[w]$-algebra. With the natural right $k[w]$-module structure on $B$, it is also a $B^e$-$k[w]$-bimodule.

Let $C_{\centerdot}$ be a free resolution of $B$ as left $B^e$-$k[w]$-bimodules, and take a free resolution of $k$ as $k[w]$-modules as follows,
$$
C'_{\centerdot}:\quad 0\xrightarrow{\ \ } k[w]\xrightarrow{\,w\,} k[w]\xrightarrow{\ \ } k\xrightarrow{\ \ }0.
$$
Tensoring $B$ to $C'_\centerdot$, the regularity of $w$ implies $\bar{B}\cong B\otimes_{k[w]}k$ as left $B^e$-modules.

Since $B$ is a right flat module over $k[z]$, $B$ and $B^e$ are right flat $k[w]$-modules.
Thus complex $D_\centerdot=C_\centerdot\otimes_{k[w]}C'_\centerdot$ is a free resolution of $B\otimes_{k[w]} k$ as left $B^e$-modules.  Applying functor $\Hom_{B^e}(-,B^e)$ to $D_\centerdot$, the flatness of $B$ and $B^e$ and the spectral sequence implies as left $B^e$-modules
\begin{align*}
\Ext^{d+1}_{B^e}(\bar{B},B^e)&\cong \Ext^{d+1}_{B^e\otimes_{k[w]}k[w]}(B\otimes_{k[w]} k,B^e\otimes_{k[w]}k[w])\\
                               &\cong \Ext^{d}_{B^e}(B,B^e)\otimes_{k[w]}\Ext^1_{k[w]}(k,k[w])\\
                               &\cong ^1\!B^{\mu_B}\otimes_{k[w]}k\\
                               &\cong ^1\!\!\bar{B}^{\mu_{\bar{B}}}.
\end{align*}
and $\Ext^{i}_{B^e}(\bar{B},B^e)=0$ if $i\neq d+1$.

Consequently, $\Ext^i_{\bar{B}^e}(\bar{B},\bar{B}^e)=0$ if $i\neq d-1$, and $\Ext^{d-1}_{\bar{B}^e}(\bar{B},\bar{B}^e)\cong ^1\!\!\bar{B}^{\mu_{\bar{B}}}$.
\end{proof}

If $U$ is a PBW deformation of a connected graded algebra $A$, then the homogenization $H(U)$ is a regular central extension of $A$ by \cite[Theorem 1.3]{CS}. That is to say, homogenization element $t$ is regular, so $H(U)$ is a flat $k[t^i]$-module for any $i\geq 1$ by \cite[Chapter 3, Corollary 3.51]{Ro}. So the condition flatness of Lemma \ref{nakayama automorphism of central} is always automatically satisfied for homogenizations.

The main result of this section is the following, which is a new version of Theorem \ref{pbw deformation is skew CY} with no noetherian assumption on the algebra $A$.

\begin{theorem}\label{nakayama automorphism of pbw}
Let $U$ be a PBW deformation of an AS-regular algebra $A=k\langle X\rangle/(R)$, $H(U)$ be the homogenization algebra of $U$. Then $H(U)$ is skew CY with the Nakayama automorphism $\mu_{H(U)}$, and $U$ is skew CY with a Nakayama automorphism $\mu_U$ induced by $\mu_{H(U)}$.
\end{theorem}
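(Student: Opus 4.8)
The plan is to travel through the homogenization in two steps: first upgrade $A$ to $H(U)$, showing that $H(U)$ is skew CY, and then descend from $H(U)$ to $U$ along the non-homogeneous central element $1-t$ by means of Lemma \ref{nakayama automorphism of central}. The only genuinely new work is the upward step, since the downward step is tailor-made for the lemma.

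For the first step I would argue that $H(U)$ is AS-regular and then invoke Theorem \ref{AS-regular iff skew CY}. By hypothesis $U$ is a PBW deformation of $A$, so the homogenization element $t$ is a homogeneous regular central element of $H(U)$ of degree $1$ with $H(U)/(t)\cong A\cong\gr U$ AS-regular; thus $H(U)$ is a connected graded algebra presented as a regular central extension of an AS-regular algebra, and it suffices to lift AS-regularity across this extension. Finite global dimension lifts: from the resolution $0\to H(U)(-1)\xrightarrow{\,t\,}H(U)\to A\to 0$ one reads off $\mathrm{pd}_{H(U)}A=1$, and the change-of-rings inequality for the central regular element $t$ bounds the global dimension of $H(U)$ by that of $A$ plus one, in particular it is finite; no noetherian hypothesis enters since only graded free resolutions are used. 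The Gorenstein condition lifts by the Rees lemma for the central non-zero-divisor $t$, which yields graded isomorphisms $\Ext^{n+1}_{H(U)}(k,H(U))\cong\Ext^{n}_{A}(k,A)(-1)$ for all $n$ (the exact shift being immaterial here), together with the right-handed analogue. Since $A$ is AS-Gorenstein of dimension $d$ these groups vanish except at $n=d$, where they are one-dimensional, so $H(U)$ is AS-Gorenstein of dimension $d+1$. Hence $H(U)$ is AS-regular, and Theorem \ref{AS-regular iff skew CY} makes it skew CY with a graded Nakayama automorphism $\mu_{H(U)}$.

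For the second step I would apply Lemma \ref{nakayama automorphism of central} to $B=H(U)$ with $z=1-t$. The element $1-t$ is central because $t$ is, and $k[1-t]=k[t]$ as subalgebras of $H(U)$, so the flatness of $H(U)$ over $k[t]$ noted after Lemma \ref{nakayama automorphism of central} gives flatness over $k[1-t]$; moreover $1-t$ is a non-zero-divisor in $k[t]$, and this flatness then forces $1-t$ to be regular in $H(U)$. The hypotheses of the lemma being met, it produces the isomorphism $\Ext^i_{U^e}(U,U^e)\cong{}^1U^{\mu_U}$ (concentrated in a single degree) defining the Nakayama automorphism of $U=H(U)/(1-t)$, with $\mu_U$ the map induced by $\mu_{H(U)}$; this descent is legitimate because Nakayama automorphisms fix central elements, so $\mu_{H(U)}$ fixes $t$ and hence $1-t$.

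It remains to record homological smoothness of $U$, which Lemma \ref{nakayama automorphism of central} does not itself supply. I would obtain it from $A=\gr U$: a connected graded AS-regular algebra is homologically smooth, and a finite resolution of $\gr U$ by finitely generated free $(\gr U)^e$-modules lifts, through the standard filtered comparison (using $\gr(U^e)\cong(\gr U)^e$), to such a resolution of $U$ over $U^e$. I expect the upward step to be the main obstacle: descending via Lemma \ref{nakayama automorphism of central} is essentially formal once $1-t$ is seen to be regular, but promoting AS-regularity from $A$ to $H(U)$ must be carried out without the noetherian assumption underlying the dualizing-complex argument of \cite{G}, so one has to lean on the Rees lemma and minimal graded resolutions and keep careful track of the degree shift in the Gorenstein parameter.
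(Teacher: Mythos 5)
Your proposal is correct and follows essentially the same route as the paper: establish AS-regularity of $H(U)$ from the regular central element $t$ (where the paper simply cites \cite[Theorem 1.3]{CS} and \cite[Lemma 3.5]{ZZ1}, you unpack the latter via change-of-rings and the Rees lemma), obtain homological smoothness of $U$ by lifting a finite free $A^e$-resolution through the filtration, and descend to $U=H(U)/(1-t)$ via Lemma \ref{nakayama automorphism of central} after checking flatness over $k[1-t]=k[t]$. The paper's own flatness check is the equivalent observation that $H$ is torsion-free over $k[z]$ for $z=1-t$.
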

\begin{proof}
Write $X=\{x_1,x_2,\cdots,x_n\}$ and $H=H(U)$. Suppose $A$ has global dimension $d$. By \cite[Theorem 1.3]{CS}, $t$ is a central regular element in $H$. So $H$ is AS-regular of dimension $d+1$ by \cite[Lemma 3.5]{ZZ1} and skew CY with Nakayama automorphism $\mu_{H}$ by Theorem \ref{AS-regular iff skew CY}.

By Theorem \ref{AS-regular iff skew CY}, $A$ has a finite length of finitely generated projective resolution as left $A^e$-modules. Since $\gr U\cong A$, $\gr U^e\cong A^e$ for a natural filtration on $U^e$. Thus there is an associated finite length of finitely generated projective resolution of $U$ as left $U^e$-modules by \cite[Chapter 2, Proposition 2.5]{LHS}. Then it remains to show that $U$ has a Nakayama automorphism.

The element $t$ is regular central and $1-t$ is central in $H$ with $\mu_H(1-t)=1-t$. Write  $z=1-t$. For any polynomial $f\in k[z]$ and any nonzero element $h\in H$, it is easy to check that $fh=0$ implies $f=0$. So $H$ is a flat $k[z]$-module by \cite[Chapter 3, Corollary 3.51]{Ro}. Thus $U\cong H/(1-t)$ also has a Nakayama automorphism $\mu_U$ induced by $\mu_H$ from Lemma \ref{nakayama automorphism of central}.

More precisely, if $\mu_{H}(x_i)=\sum_{j=1}^na_{ij}x_j+b_it$ for some $a_{ij},b_{i}\in k$, then $\mu_U(x_i)=\sum_{j=1}^na_{ij}x_j+b_i$ for all $i=1,\cdots,n$.
\end{proof}

Immediately, we have a CY property about $U$.
\begin{corollary}\label{homogenization is CY then PBW deformation is CY}
Let $U$ be a PBW deformation of an AS-regular algebra $A$, and let $H(U)$ be the homogenization of $U$. If $H(U)$ is CY, then $U$ and $A$ are CY.
 \end{corollary}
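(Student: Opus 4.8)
The plan is to reduce the statement to the triviality of the relevant Nakayama automorphisms, exploiting the explicit description supplied by Theorem \ref{nakayama automorphism of pbw} together with the descent Lemma \ref{nakayama automorphism of central}. Recall that, by Remark (c) following the definition of skew CY, an algebra is CY exactly when the identity map is one of its Nakayama automorphisms.

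First I would note that $H(U)$ is connected graded AS-regular of dimension $d+1$ (as established in the proof of Theorem \ref{nakayama automorphism of pbw}), so its Nakayama automorphism $\mu_{H(U)}$ is a graded automorphism, unique up to inner automorphisms. Because $H(U)$ is connected graded, every unit lies in degree zero, i.e.\ in $k$, so each graded inner automorphism is trivial; hence the hypothesis that $H(U)$ is CY pins $\mu_{H(U)}$ down to be exactly $\id$. In particular $\mu_{H(U)}(x_i)=x_i$ and $\mu_{H(U)}(t)=t$ for every $i$.

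For $U$ I would invoke the explicit formula at the end of Theorem \ref{nakayama automorphism of pbw}: writing $\mu_{H(U)}(x_i)=\sum_{j=1}^n a_{ij}x_j+b_i t$, the induced Nakayama automorphism satisfies $\mu_U(x_i)=\sum_{j=1}^n a_{ij}x_j+b_i$. Since $\mu_{H(U)}=\id$ forces $a_{ij}=\delta_{ij}$ and $b_i=0$, we get $\mu_U(x_i)=x_i$ on the generators, so $\mu_U=\id$ and $U$ is CY. For $A$ I would use the identifications $A\cong H(U)/(t)\cong\gr U$ recorded just before Theorem \ref{nakayama automorphism of pbw}, together with the fact established there that $t$ is a regular central element and $H(U)$ is a flat $k[t]$-module. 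Applying Lemma \ref{nakayama automorphism of central} with $B=H(U)$ and $z=t$ then endows $A$ with a Nakayama automorphism induced by $\mu_{H(U)}=\id$, which is again the identity, so $A$ is CY.

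The argument is essentially corollary-level bookkeeping of two prior results, and I do not expect a genuine obstacle. The only point demanding care is the passage from \textquotedblleft$H(U)$ is CY\textquotedblright\ to \textquotedblleft$\mu_{H(U)}=\id$\textquotedblright: since CY is defined only up to inner automorphisms, one must justify that in the connected graded setting the graded inner automorphisms are trivial, which is what allows one to identify the distinguished graded Nakayama automorphism $\mu_{H(U)}$ with the identity and then feed it into both the formula for $\mu_U$ and the descent to $A$.
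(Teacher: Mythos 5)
Your proof is correct and matches the paper's intent: the paper states this corollary as an immediate consequence of Theorem \ref{nakayama automorphism of pbw} and Lemma \ref{nakayama automorphism of central}, which is exactly the bookkeeping you carry out. One small caveat: the claim that every unit of a connected graded algebra lies in degree zero requires a domain hypothesis in general, but the fact you actually need --- that a \emph{graded} inner automorphism of a connected graded algebra generated in degree one fixes the degree-one generators and is therefore trivial --- holds regardless, so the identification $\mu_{H(U)}=\id$ and the rest of your argument stand.
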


The following lemma is proved for $N$-Koszul algebras in \cite{HVZ}, we give a general situation.
\begin{lemma}\label{nakayama automorphism of gr}
Let $U$ be a PBW deformation of an  AS-regular algebra $A=k\langle X\rangle/(R)$ with the Nakayama automorphism $\mu_A$. Then there exists a Nakayama automorphism $\mu_U$ of $U$ preserving the canonical filtration on $U$ and $\gr(\mu_U)=\mu_A$.

In addition, if $A$ is domain then the filtration-preserving Nakayama automorphism $\mu_U$ is unique.
\end{lemma}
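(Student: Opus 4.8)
The plan is to build $\mu_U$ directly from the graded Nakayama automorphism of the homogenization and then read off its associated graded map on the generators. Let $H=H(U)$ and let $\mu_H$ be its graded Nakayama automorphism, provided by Theorem \ref{nakayama automorphism of pbw}. Since $t$ is central we have $\mu_H(t)=t$, so $\mu_H$ descends both modulo $(1-t)$ and modulo $(t)$. Writing $\mu_H(x_i)=\sum_j a_{ij}x_j+b_it$, the explicit formula at the end of the proof of Theorem \ref{nakayama automorphism of pbw} gives a Nakayama automorphism $\mu_U$ of $U\cong H/(1-t)$ determined by $\mu_U(x_i)=\sum_j a_{ij}x_j+b_i$, while reduction modulo $(t)$ produces the graded automorphism $x_i\mapsto\sum_j a_{ij}x_j$ of $A\cong H/(t)$, which is a Nakayama automorphism of $A$ by Lemma \ref{nakayama automorphism of central}.

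I would first record that this reduced map is exactly $\mu_A$: for a connected graded algebra generated in degree one the graded Nakayama automorphism is unique, because a graded inner automorphism fixes the degree-one generators and is therefore the identity. Next, each $\mu_U(x_i)$ lies in $F_1U$, and since the $x_i$ generate $U$, a length-$m$ word is sent into a product of $m$ elements of $F_1U$; hence $\mu_U(F_mU)\subseteq F_mU$, and the same holds for $\mu_U^{-1}$ (induced by $\mu_H^{-1}$), so $\mu_U$ is filtration-preserving and $\gr(\mu_U)$ is defined. Because $\mu_A$ is a graded automorphism the matrix $(a_{ij})$ is invertible, so $\sum_j a_{ij}x_j$ is genuinely the leading symbol of $\mu_U(x_i)$; thus $\gr(\mu_U)(x_i)=\sum_j a_{ij}x_j=\mu_A(x_i)$, and agreement on generators yields $\gr(\mu_U)=\mu_A$.

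For the uniqueness clause, suppose $A$ is a domain. Then $\gr U\cong A$ is a domain, so $U$ is a domain and the degree function is additive on products. If $v$ is a unit of $U$, then $\deg v+\deg v^{-1}=\deg 1=0$ with both degrees nonnegative, forcing $v\in F_0U=k$; hence every inner automorphism of $U$ is trivial. Since Nakayama automorphisms are unique up to inner automorphisms, $\mu_U$ is the unique Nakayama automorphism of $U$, in particular the unique filtration-preserving one. I expect the only real subtlety to be the bookkeeping of the second paragraph---correctly identifying the reduction of $\mu_H$ with the prescribed $\mu_A$ and confirming that taking associated graded recovers it---whereas the uniqueness is a short structural consequence of $U$ being a filtered domain.
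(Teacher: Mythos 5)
Your proposal is correct and follows essentially the same route as the paper: read off $\mu_U(x_i)=\sum_j a_{ij}x_j+b_i$ from the Nakayama automorphism of the homogenization via Lemma \ref{nakayama automorphism of central} and Theorem \ref{nakayama automorphism of pbw}, deduce filtration-preservation and $\gr(\mu_U)=\mu_A$ on generators, and for uniqueness observe that in the filtered domain $U$ all units lie in $F_0U=k$ so inner automorphisms are trivial. Your uniqueness argument is phrased slightly more cleanly (additivity of the degree function forcing units into $F_0U$), but it is the same underlying idea as the paper's.
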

\begin{proof}
Write $X=\{x_1,x_2,\cdots,x_n\}$ and $\mu_A(x_i)=\sum_{j=1}^na_{ij}x_j$ for some $a_{ij}\in k$. Let $H$ be the homogenization of $U$ with the Nakayama automorphism of $\mu_H$. Since $H/(t)\cong A$ and Lemma \ref{nakayama automorphism of central}, we have $\mu_H(x_i)=\sum_{j=1}^na_{ij}x_j+b_it$ for some $b_i\in k$. Now using Theorem \ref{nakayama automorphism of pbw}, $\mu_U(x_i)=\sum_{j=1}^na_{ij}x_j+b_i$. It is a filtration preserving automorphism and $\gr (\mu_U)=\mu_A$.

Assume $A$ is domain, so is $U$. Suppose $\mu_U$ and $\mu'_U$ are two Nakayama automorphisms of $U$ preserving the filtration such that $\gr(\mu_U)=\gr(\mu'_U)=\mu_A$. However, the Nakayama automorphism is unique up to inner isomorphisms, there exists an invertible elements $w$ in $U$ such  that $\mu_U(x_i)=w\mu'_U(x_i)w^{-1}$. Consider the image of $w\mu'_U(x_i)w^{-1}$ in $A$ should be nonzero, it forces $w,w^{-1}\in F_0U= k$. Thus $\mu_U=\mu'_U$.
\end{proof}

Although the filtration-preserving Nakayama automorphism of PBW deformation $U$ is not unique in general, we always choose the Nakyama automorphism $\mu_U$ of $U$ as in Theorem \ref{nakayama automorphism of pbw} in the sequel. In this case, when the Nakayama automorphism $\mu_H$ of homogenization $H$ applies to the generators of $H$, $\mu_H(x_i)$ has the form $\mu_U(x_i)^{\mathbf{t}}$ for every $x_i\in X$.

\begin{example}\label{pbw of quantum plane}
Let quantum plane $k_q[x_1,x_2]=k\langle x_1,x_2\rangle/(x_1x_2-qx_2x_1)$ where $q$ is a scalar in $k^{\times}$ with the Nakayama automorphism sending
$x_1\mapsto qx_1$ and $x_2\mapsto q^{-1}x_2.$ We consider PBW deformations of it. Assume $q\neq1$. By definition,
$$
U=k\langle x_1,x_2\rangle/(x_1x_2-qx_2x_1+ax_1+bx_2+c)
$$
are all PBW deformations where $a,b,c\in k$. Let $H$ be the homogenization of $U$. Then $k_q[x_1,x_2]\cong H/(t)$. So by Lemma \ref{nakayama automorphism of central}, the Nakayama automorphism $\mu_H$ of $H$ is
$$
\mu_H(x)=qx_1+d_1t,\quad\mu_H(x_2)=q^{-1}x_2+d_2t,
$$
for some $d_1,d_2\in k$. By Theorem \ref{nakayama automorphism of pbw}, $U$ has a Nakayama automorphism $\mu_U$ satisfying
$$
\mu_U(x_1)=qx_1+d_1,\quad\mu_U(x_2)=q^{-1}x_2+d_2.
$$
However, $\mu_U$ is a well-defined algebra homomorphism, so it needs $d_1=-b$ and $d_2=q^{-1}a$. Thus $U$ has a Nakayama automorphism sending $x_1\mapsto qx_1-b$ and $x_2\mapsto q^{-1}x_2+q^{-1}a$.
\end{example}

In the following, we will find some examples whose Nakayama automorphisms of  AS-regular algebras and ones of PBW deformations of them have the same form. Here are some general conditions.

\begin{proposition}\label{form of nakayama automorphism}
Let $X=\{x_1,x_2,\cdots,x_n\}$ and let $U$ be a PBW deformation of an AS-regular algebra $A=k\langle X\rangle/(R)$. Let $H$ be the homogenization of $U$. Assume the Nakayama automorphism $\mu_A$ of $A$ such that $\mu_A(x_i)=\sum_{j=1}^n a_{ij}x_j$ for some $a_{ij}\in k$ and all $i=1,2,\cdots,n$.

\begin{enumerate}
\item If $H/(t^2)\cong A[t]/(t^2)$, then there exists a Nakyama automorphism $\mu_U$ of $U$ such that $\mu_U(x_i)=\sum_{j=1}^n a_{ij}x_j$ for any $i=1,\cdots,n$.
\item Let $J$ be the Jordan canonical form of the matrix $M=(a_{ij})$. Suppose $M$ has no eigenvalue $1$, then there exists a basis $\{1,v_1,v_2,\cdots,v_n\}$ of $F_1U=k\oplus k\{X\}$ such that $\mu_U(\mathrm{\mathbf{v}})=J\mathrm{\mathbf{v}}$  where the column vector $\mathrm{\mathbf{v}}=(v_1,v_2,\cdots,v_n)^T$.
\end{enumerate}
\end{proposition}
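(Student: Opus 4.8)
My plan is to push everything down to the degree-one part, where the Nakayama automorphism is already completely determined, so that both statements become assertions in linear algebra. By Theorem~\ref{nakayama automorphism of pbw} and Lemma~\ref{nakayama automorphism of central} I may take $\mu_H(t)=t$ and $\mu_H(x_i)=\sum_{j=1}^n a_{ij}x_j+b_it$ for some scalars $b_i$, whence the induced automorphism satisfies $\mu_U(x_i)=\sum_{j=1}^n a_{ij}x_j+b_i$. In particular $\mu_U$ preserves $F_1U=k\oplus k\{X\}$, and writing $\mathbf{x}=(x_1,\dots,x_n)^T$, $M=(a_{ij})$ and $\mathbf{b}=(b_1,\dots,b_n)^T$, it acts there as the affine map $\mathbf{x}\mapsto M\mathbf{x}+\mathbf{b}$ fixing the constant $1$. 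Both parts are assertions about this single affine map, so I would work throughout inside the finite-dimensional space $F_1U$, noting that a graded automorphism cannot introduce higher-order $t$-corrections in degree one.

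For (b) I would use that the constant $1$ is the only source of the eigenvalue $1$. Since $M$ has no eigenvalue $1$, the matrix $I-M$ is invertible, so the affine map $\mathbf{x}\mapsto M\mathbf{x}+\mathbf{b}$ has a unique fixed point $\mathbf{x}_0=(I-M)^{-1}\mathbf{b}\in k^n$. Setting $y_i=x_i-(\mathbf{x}_0)_i\in F_1U$ removes the constant term, so that $\mu_U(\mathbf{y})=M\mathbf{y}$ with $\mathbf{y}=(y_1,\dots,y_n)^T$. Choosing $P$ with $P^{-1}MP=J$ and putting $\mathbf{v}=P^{-1}\mathbf{y}$ then gives $\mu_U(\mathbf{v})=P^{-1}M\mathbf{y}=P^{-1}MP\mathbf{v}=J\mathbf{v}$; since the $v_i$ are linearly independent modulo the constants, $\{1,v_1,\dots,v_n\}$ is a basis of $F_1U$, as required. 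This part is routine once the fixed point is produced, and the invertibility of $I-M$ does all the work.

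For (a) the aim is instead to annihilate the vector $\mathbf{b}$ on a suitable set of degree-one generators, \emph{without} assuming $I-M$ invertible. Here I would reduce $\mu_H$ modulo the ideal $(t^2)$, which is legitimate since $\mu_H(t)=t$ forces $\mu_H((t^2))\subseteq(t^2)$, and then transport the induced automorphism $\bar{\mu}_H$ of $H/(t^2)$ along the hypothesised graded isomorphism $\Phi\colon H/(t^2)\xrightarrow{\sim}A[t]/(t^2)$. In the split square-zero extension $A[t]/(t^2)=A\ltimes A(-1)$ the element $t$ generates the intrinsic square-zero ideal, so after normalising $\Phi$ it carries $t$ to a scalar multiple of $t$ and descends to $\mu_A$ on $A=H/(t)$. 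The upshot is that $\Phi$ pulls the standard generators of $A[t]/(t^2)$ back to degree-one elements $y_i=x_i+\gamma_it$ of $H$ on which the transported map is the diagonal automorphism $\mu_A\otimes\mathrm{id}$, i.e. $\mu_H(y_i)=\sum_{j=1}^n a_{ij}y_j$; equivalently the scalars $\gamma_i$ solve $(I-M)\boldsymbol{\gamma}=-\mathbf{b}$, so the splitting of the first-order extension is exactly what supplies the shift removing $\mathbf{b}$. Passing to $U=H/(1-t)$, the images $\bar{y}_i$ form a generating set with $\mu_U(\bar{y}_i)=\sum_j a_{ij}\bar{y}_j$, and renaming them $x_i$ yields the statement.

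The main obstacle is precisely this transport step in (a): I must justify that $\Phi$ can be normalised to send $t$ to a scalar multiple of $t$ and to intertwine $\bar{\mu}_H$ with $\mu_A\otimes\mathrm{id}$, that is, that triviality of the extension $H/(t^2)$ is strong enough to force the $t$-linear correction $b_it$ into the image of $I-M$. Pinning down $t$ relies on the square-zero ideal $(t)$ being intrinsic to the algebra structure of $A[t]/(t^2)$, and the intertwining is then a direct computation confined to the degree-one space, all higher-degree data being irrelevant. Everything else is the elementary fixed-point and Jordan-form manipulation of the previous paragraph.
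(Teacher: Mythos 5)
Your part (b) is correct and is essentially the paper's own argument: the paper writes the matrix of $\mu_H$ on the basis $\{t,x_1,\dots,x_n\}$ as a block matrix with diagonal blocks $1$ and $M$ and off-diagonal column $\mathbf{b}$, and conjugates it to $\operatorname{diag}(1,J)$ using that $1$ is not an eigenvalue of $M$; your explicit fixed point $\mathbf{x}_0=(I-M)^{-1}\mathbf{b}$ followed by conjugation to Jordan form is the same linear algebra.

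Part (a) is where there is a genuine gap. The paper reads the hypothesis $H/(t^2)\cong A[t]/(t^2)$ via the \emph{canonical} map (identity on the $x_i$ and on $t$), applies Lemma \ref{nakayama automorphism of central} with central element $t^2$ to both $H$ and $A[t]$, and uses that the graded Nakayama automorphism of the connected graded algebra $H/(t^2)$ is unique (graded inner automorphisms are trivial since homogeneous units are scalars); comparing the coefficient of $t$ in $\mu_H(x_i)=\sum_j a_{ij}x_j+b_it$ with $\mu_{A[t]}(x_i)=\sum_j a_{ij}x_j$ inside the degree-one part $A_1\oplus kt$ of $H/(t^2)$ gives $b_i=0$ outright. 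Your route through an abstract isomorphism $\Phi$ runs into two problems. First, the normalisations you need --- that $\Phi$ carries $t$ to a scalar multiple of $t$ and reduces to the identity modulo $(t)$ --- are asserted rather than proved; the ideal $(t)$ is intrinsic to $A[t]/(t^2)$ only under extra hypotheses (for instance $A$ a domain), and nothing forces the $t$-components of the $\Phi(x_i)$ to vanish. Second, and more seriously, even after granting $\Phi(t)=\alpha t$, the intertwining relation $\Phi\circ\bar\mu_H=\mu_{A[t]/(t^2)}\circ\Phi$ on degree one only yields $\alpha\mathbf{b}=(I-M)\mathbf{e}$, where $\mathbf{e}$ records the $t$-components of the $\Phi(x_i)$; that is, you obtain $\mathbf{b}\in\operatorname{im}(I-M)$, not $\mathbf{b}=0$. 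Your write-up lands exactly there (the $\gamma_i$ solving $(I-M)\gamma=-\mathbf{b}$) and then ``renames'' the shifted generators $x_i+\gamma_i$ back to $x_i$. But the statement, and its later uses (the Weyl algebra, Example \ref{down-up}), concern the original generators of the given presentation: when $I-M$ is invertible your conclusion adds nothing beyond part (b), and only in the special case $M=I$ does $\mathbf{b}\in\operatorname{im}(I-M)$ collapse to $\mathbf{b}=0$. To close the gap, take the isomorphism in the hypothesis to be the canonical one and compare $t$-coefficients directly as the paper does.
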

\begin{proof}
By Lemma \ref{nakayama automorphism of central} and Theorem \ref{nakayama automorphism of pbw}, the Nakayama automorphism $\mu_H$ of $H$ satisfies $\mu_H(x_i)=\sum_{j=1}^n a_{ij}x_j+b_it$ and there exists a Nakayama automorphism $\mu_U$ of $U$ such that $\mu_U(x_i)=\sum_{j=1}^n a_{ij}x_j+b_i$ where $b_i\in k$ and $i=1,\cdots,n$.

(a) By Lemma \ref{nakayama automorphism of central}, we have
$$
\mu_H=\mu_{H/(t^2)}=\mu_{A[t]/(t^2)}=\mu_{A[t]}
$$
when all maps are induced on $H/(t^2)$. Thus $\mu_H(x_i)=\sum_{j=1}^n a_{ij}x_j$. So $b_i=0$ for all $i=1,\cdots,n$.

(b) Write the matrix $N=\left(\begin{array}{cc}1&0\\\mathrm{\mathbf{b}}&M\end{array}\right)$, which is the corresponding matrix of $\mu_H$ with respect to the basis $\{t,x_1,\cdots,x_n\}$, where $\mathrm{\mathbf{b}}=(b_1,b_2,\cdots,b_n)^T$ . If $M$ has no eigenvalue $1$, then $N$ has the Jordan canonical form $\operatorname{diag}(1,J)$. The conclusion follows.
\end{proof}

It has been proved that the Weyl algebras are CY in \cite{Be}. Also \cite{CWWZ,HZ,LWW} gave different approaches to the conclusion. However, the proof of \cite{Be,CWWZ} used the Koszul dual. The authors of \cite{HZ} defined a kind of special quadratic algebras and studied the CY property of their PBW deformations which contains the Weyl algebras. A more general way to obtain the Nakayama automorphisms of the Weyl algebras is using Ore extension (see \cite[Remark 4.2]{LWW}). Here we use homogenization to calculate the Nakayama automorphisms of the Weyl algebras.

\begin{corollary}
Weyl algebra $A_n(k)$ is CY.
\end{corollary}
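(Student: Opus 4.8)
The plan is to exhibit the Weyl algebra $A_n(k)$ as a PBW deformation of a polynomial ring and then apply Proposition \ref{form of nakayama automorphism}(a). Write $A_n(k)=k\langle x_1,\dots,x_n,y_1,\dots,y_n\rangle$ modulo the relations $x_ix_j-x_jx_i$, $y_iy_j-y_jy_i$, $x_iy_j-y_jx_i$ for $i\neq j$, and $x_iy_i-y_ix_i-1$ for each $i$. The leading homogeneous parts of these relations are precisely the commutators defining the commutative polynomial ring $A=k[x_1,\dots,x_n,y_1,\dots,y_n]$ in $2n$ variables, so $A_n(k)$ is a PBW deformation of $A$. Now $A$ is noetherian and AS-regular of global dimension $2n$, and being commutative it is CY; hence its Nakayama automorphism is the identity, which in the notation of Proposition \ref{form of nakayama automorphism} means $a_{ij}=\delta_{ij}$.

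Next I would form the homogenization $H=H(A_n(k))$. The only defining relation carrying a lower-degree term is $x_iy_i-y_ix_i-1$, whose homogenization is $x_iy_i-y_ix_i-t^2$; every other relation is already a homogeneous commutator. Thus $H=k\langle x_1,\dots,x_n,y_1,\dots,y_n,t\rangle$ with $t$ central, subject to $x_ix_j-x_jx_i$, $y_iy_j-y_jy_i$, $x_iy_j-y_jx_i$ for $i\neq j$, and $x_iy_i-y_ix_i-t^2$. Reducing modulo $t^2$ collapses the last family to $x_iy_i-y_ix_i$, so all generators commute and $H/(t^2)\cong A[t]/(t^2)$.

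With this isomorphism verified, Proposition \ref{form of nakayama automorphism}(a) produces a Nakayama automorphism $\mu_U$ of $U=A_n(k)$ with $\mu_U(x_i)=x_i$ and $\mu_U(y_i)=y_i$ for all $i$; that is, $\mu_U=\mathrm{id}$. Since $A_n(k)$ is already skew CY by Theorem \ref{nakayama automorphism of pbw}, the identity being a Nakayama automorphism shows $A_n(k)$ is CY.

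I do not anticipate a serious obstacle: once the homogenized relations are written down, the argument is pure bookkeeping. The only point needing care is the isomorphism $H/(t^2)\cong A[t]/(t^2)$, i.e.\ confirming that reduction mod $t^2$ genuinely annihilates the deformation term in $x_iy_i-y_ix_i-t^2$ and leaves a commutative ring; this is immediate from the explicit relations. As an alternative one could instead show that the homogenized Weyl algebra $H$ is itself CY and invoke Corollary \ref{homogenization is CY then PBW deformation is CY}, but routing through Proposition \ref{form of nakayama automorphism}(a) sidesteps any direct computation of $\mu_H$.
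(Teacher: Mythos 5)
Your proof is correct and follows exactly the paper's route: realize $A_n(k)$ as a PBW deformation of the polynomial ring $B=k[x_1,\dots,x_n,y_1,\dots,y_n]$, observe that the homogenized relation $x_iy_i-y_ix_i-t^2$ dies modulo $t^2$ so that $H/(t^2)\cong B[t]/(t^2)$, and apply Proposition \ref{form of nakayama automorphism}(a). The only difference is that you spell out the verification of $H/(t^2)\cong B[t]/(t^2)$ which the paper dismisses as clear.
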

\begin{proof}
Weyl algebra $A_n(k)$ is a PBW deformation of $B$, where $B=k[x_1,\cdots,x_n,y_1,\cdots,y_n]$. Let $H$ be the homogenization of $A_n(k)$. It is clear that $H/(t^2)\cong B[t]/(t^2)$. However the Nakayama automorphism of $B$ is identity, so $A_n(k)$ is CY by Proposition \ref{form of nakayama automorphism}(a).
\end{proof}

\begin{example}\label{down-up}
The Down-Up algebra $A(\alpha,\beta,\gamma)=k\langle x_1,x_2\rangle/(f_1,f_2)$ was introduced in \cite{BR} firstly, where
\begin{eqnarray*}
&&f_1=x_1^2x_2-\alpha x_1x_2x_1-\beta x_2x_1^2-\gamma x_1,\\
&&f_2=x_1x_2^2-\alpha x_2x_1x_2-\beta x_2^2x_1-\gamma x_2,
\end{eqnarray*}
and $\alpha,\beta,\gamma\in k$. We assume $\beta\neq 0$, so $A(\alpha,\beta,\gamma)$ is noetherian and $A(\alpha,\beta,0)$ is $3$-dimensional AS-regular (\cite{KMP}). One can easily check that $A(\alpha,\beta,\gamma)$ is a PBW deformation of $A(\alpha,\beta,0)$.  Write $B=A(\alpha,\beta,0)[t]$. By Example $A(6)$ in \cite{LMZ}, the Nakayama automorphism $\mu_B$ of $B$ satisfies
$$
\mu_B(x_1)=-\beta x_1,\quad \mu_B(x_2)=-\beta^{-1}x_2,\quad \mu_B(t)=t.
$$

Let $H$ be the homogenization of $A(\alpha,\beta,\gamma)$. Then $H/(t^2)\cong B/(t^2)$. Now by Proposition \ref{form of nakayama automorphism}(a), we know the Nakayama automorphism of $A(\alpha,\beta,\gamma)$ is
$$
x_1\mapsto-\beta x_1,\quad x_2\mapsto-\beta^{-1}x_2.
$$
Moreover, $A(\alpha,\beta,\gamma)$ is CY if and only if $\beta=-1$.
\end{example}

Following we investigate examples with non-diagonalizable Nakayama automorphisms.

\begin{example}\label{pbw of hpbw of down-up}
Starting with graded Down-Up algebras, there are four classes of Artin-Schelter regular algebras which are \emph{homogeneous PBW deformations} of graded Down-Up algebras (see \cite[Example 5.3]{SZL}). The following are two special cases:  $\mathfrak{B}=k\langle x_1,x_2\rangle/(f_1,f_2)$ where
\begin{eqnarray*}
&&f_1 =x_1^2x_2-x_2x_1^2 -x_2x_1x_2+x_2^2x_1   ,\\
&&f_2 = x_1x_2^2 - x_2^2x_1,
\end{eqnarray*}
and $\mathfrak{C}=k\langle x_1,x_2\rangle/(g_1,g_2)$  where
\begin{eqnarray*}
&&g_1 =,x_1^2x_2-2x_1x_2x_1+x_2x_1^2 -x_2x_1x_2+x_2^2x_1\\
&&g_2 = x_1x_2^2-2x_2x_1x_2 + x_2^2x_1.
\end{eqnarray*}
The element $z=x_1x_2-x_2x_1$ is normal in both $\mathfrak{B}$ and $\mathfrak{C}$. Then $$\mathfrak{B}/(z)\cong\mathfrak{C}/(z)\cong k[x_1,x_2].$$
By \cite[Lemma 1.5]{RRZ}, we have the Nakayama automorphism of $\mathfrak{B}$ sending
$
x_1\mapsto -x_1-x_2,\; x_2\mapsto -x_2,
$
and the Nakayama automorphism of $\mathfrak{C}$ sending
$
x_1\mapsto x_1+x_2,\; x_2\mapsto x_2.
$

Let $U_1=k\langle x_1,x_2\rangle/(f_1,f_2+z)$ and $U_2=k\langle x_1,x_2\rangle/(g_1,g_2+z)$. They are two PBW deformations of $\mathfrak{B}$ and $\mathfrak{C}$ respectively.  Let $H_1,H_2$ be the homogenizations of $U_1,U_2$ respectively. Then $z$ is still a normal element in $H_1$ and $H_2$. Clearly,
$$
H_1/(z)\cong H_2/(z)\cong k[x_1,x_2,t].
$$

By \cite[Lemma 1.5]{RRZ} and Theorem \ref{nakayama automorphism of pbw}, we obtain a Nakayama automorphism of $U_1$ sending
$$
x_1\mapsto -x_1-x_2-1,\quad x_2\mapsto -x_2-1,
$$
and a Nakayama automorphism of $U_2$ sending
$$
x_1\mapsto x_1+x_2+1,\quad x_2\mapsto x_2+1.
$$
\end{example}

We conclude this section by utilizing homogenization to a CY properties to general setting. In the proof, there is a homological identity involving the notation \emph{homological determinant}. We only give a brief introduction to the definition. For the details, we refer to \cite[Section 3]{KKZ} and \cite[Section 3]{RRZ}.

Let $A$ be a noetherian AS-Gorenstein algebra, and let $K$ be a Hopf algebra. Suppose $A$ is a left $K$-module algebra. The balanced dualizing complex of $A$ is $R\cong\,^{\mu}\!A^1(-l)[d]$ where $d$ is injective dimension of $A$, $l$ is Gorenstein parameter and $\mu$ is the Nakayama automorphism of $A$. Then there is a left $K$-module structure on degree $(-d)$ part of $R$, so on $^{\mu}\!A^1(-l)$. Since the degree $l$ piece of $^{\mu}\!A^1(-l)$ is dimension one, each element of $K$ acts on it by a scalar. Then we use this point to define an algebra homomorphism $\textsf{hdet}: K\to k$ which is called \emph{homological determinant}. On the other hand, assume $K$ is finite dimensional, then $\textsf{hdet}$ is an element in the finite dual $K^o$. Then it is called the \emph{homological codeterminant} of the $K^o$-coaction on $A$ from right. If $\textsf{hdet}$ is just the counit of $K$, we say it is \emph{trivial}. For example, Hopf algebra $K$ is just a group algebra where the group is a subgroup of automorphism group of $A$, so every automorphism has an image in $k$ through $\textsf{hdet}$.

If $A$ is a graded algebra, then denote by $\xi_c$ an endomorphism of $A$ sending $a$ to $c^{\deg a}a$ for some $c\in k^{\times}$ and all $a\in A$ . With Koszul hypothesis, the next results have been proved in \cite[Theorem 3.1(1,3)]{WZ} and  \cite[Theorem 0.1 and 0.2]{HVZ1}.
\begin{corollary}\label{CY properties}
Let $A=k\langle X\rangle/(R)$ be an AS-regular algebra with Nakayama automorphism $\mu_A$. Let $U$ be a PBW deformation of $A$, and $H$ be the homogenization of $U$.
\begin{enumerate}
\item If $A$ is a domain, then $H$ is CY if and only if $U$ is CY.
\item If $A$ is noetherian. Choose the Nakayama automorphism $\mu_U$ of $U$ induced by the Nakayama automorphism $\mu_H$ of $H$. Then the skew extensions $A[z,\mu_A]$ and $U[z,\mu_U]$ are CY.
\end{enumerate}
\end{corollary}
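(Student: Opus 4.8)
The plan is to read both statements off the explicit correspondence of Theorem \ref{nakayama automorphism of pbw}, namely that $\mu_H(x_i)=\sum_{j}a_{ij}x_j+b_it$ forces $\mu_U(x_i)=\sum_{j}a_{ij}x_j+b_i$ and conversely; in particular $\mu_H=\mathrm{id}$ if and only if $\mu_U=\mathrm{id}$. For (b) I would then collapse the Calabi--Yau property of the two skew extensions to that of a single graded Ore extension of $H$ by invoking Corollary \ref{homogenization is CY then PBW deformation is CY}.

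For (a), it suffices to show that ``$H$ is CY'' and ``$U$ is CY'' are each equivalent to $\mu_H=\mathrm{id}$ (equivalently $\mu_U=\mathrm{id}$). First I would observe that, since $A$ is a domain and $t$ is central regular with $H/(t)\cong A$, a short argument factoring out the largest power of $t$ from a putative zero divisor shows that the connected graded algebra $H$ is again a domain; hence its only units are the nonzero scalars, graded inner automorphisms are trivial, and ``$H$ is CY'' is equivalent to $\mu_H=\mathrm{id}$. On the $U$ side, the automorphism $\mu_U$ above preserves the canonical filtration, and if $U$ is CY then the identity map is also a filtration-preserving Nakayama automorphism; since $A$ is a domain, the uniqueness in Lemma \ref{nakayama automorphism of gr} forces these two to coincide, so ``$U$ is CY'' is equivalent to $\mu_U=\mathrm{id}$. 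Chaining the two equivalences through the correspondence gives (a).

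For (b), the key observation is that $U[z,\mu_U]$ is a PBW deformation of $A[z,\mu_A]$ whose homogenization is the graded Ore extension $H[z;\mu_H]$. Indeed, filtering $U[z,\mu_U]$ with $z$ in degree $1$, the Ore relation $zu=\mu_U(u)z$ degenerates on the associated graded to $z\bar u=\mu_A(\bar u)z$ because $\gr(\mu_U)=\mu_A$, so $\gr(U[z,\mu_U])\cong A[z;\mu_A]$, which is AS-regular; and homogenizing $zx_i-\sum_j a_{ij}x_jz-b_iz=0$ yields $zx_i-(\sum_j a_{ij}x_j+b_it)z=zx_i-\mu_H(x_i)z=0$ while $t$ stays central with $\mu_H(t)=t$, so $H(U[z,\mu_U])\cong H[z;\mu_H]$. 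By Corollary \ref{homogenization is CY then PBW deformation is CY} it is therefore enough to prove that the single graded algebra $H[z;\mu_H]$ is CY. Now $H$ is noetherian AS-regular with Nakayama automorphism $\mu_H$, so $H[z;\mu_H]$ is again noetherian AS-regular, and by the Nakayama-automorphism formula for graded Ore extensions (cf. \cite{LWW,RRZ}) its Nakayama automorphism restricts to $\mu_H\circ\mu_H^{-1}=\mathrm{id}$ on $H$ and sends $z\mapsto\textsf{hdet}(\mu_H)\,z$. Using the homological identity $\textsf{hdet}(\mu_H)=1$ for a Nakayama automorphism, this Nakayama automorphism is the identity, so $H[z;\mu_H]$ is CY; Corollary \ref{homogenization is CY then PBW deformation is CY} then yields that both $U[z,\mu_U]$ and $A[z,\mu_A]$ are CY.

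The hard part is pinning down the action of the Nakayama automorphism of $H[z;\mu_H]$ on the new variable $z$, which is exactly where the homological determinant enters: one needs the graded Ore-extension formula in the present non-Koszul generality together with the triviality $\textsf{hdet}(\mu_H)=1$, read off from the balanced dualizing complex $R\cong{}^{\mu_H}\!H^1(-l)[d+1]$ of $H$ and the definition of $\textsf{hdet}$. Once this is in hand the two Calabi--Yau conclusions are formal, and routing everything through the homogenization is what lets me avoid working directly with the filtered algebra $U[z,\mu_U]$, for which no homological determinant is available.
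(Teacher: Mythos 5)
Your proof is correct and follows the same overall architecture as the paper's: part (a) is the combination of Corollary \ref{homogenization is CY then PBW deformation is CY} with the uniqueness statement of Lemma \ref{nakayama automorphism of gr} (your extra observation that $H$ is a domain is harmless but not needed, since the graded units of any connected graded algebra are already just the scalars), and part (b) proceeds, exactly as in the paper, by identifying $H(U[z,\mu_U])$ with $H[z,\mu_H]$ and $\gr(U[z,\mu_U])$ with $A[z,\mu_A]$ and then feeding the Calabi--Yau property of $H[z,\mu_H]$ back through Corollary \ref{homogenization is CY then PBW deformation is CY}. The one place you diverge is the sub-argument that the skew extension of a noetherian AS-regular algebra by its Nakayama automorphism is CY: you invoke the Nakayama-automorphism formula for graded Ore extensions together with $\textsf{hdet}(\mu_H)=1$, which is precisely the route the paper flags as available (via \cite[Corollary 0.6(1)]{RRZ} and \cite[Theorem 5.3]{RRZ1}) before opting, ``for completeness,'' for a self-contained computation with the graded twists $B=A^{\mu_A}[z]$ and $C=B^{\sigma}$ and the identity of \cite[Theorem 5.4(a)]{RRZ}. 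Both variants rest on the triviality of the homological determinant of the Nakayama automorphism; note that this is a theorem (\cite[Theorem 5.3]{RRZ1}) rather than something read off from the definition of $\textsf{hdet}$, as your closing paragraph suggests, but since you cite the correct fact this is only a matter of presentation. A small economy of your version is that the single statement about $H[z,\mu_H]$ delivers the CY property of $A[z,\mu_A]$ and $U[z,\mu_U]$ simultaneously, whereas the paper treats $A[z,\mu_A]$ first and then the deformation.
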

\begin{proof}
Set $X=\{x_1,x_2,\cdots,x_n\}$, $R=\{r_1,r_2,\cdots,r_m\}$ and $U=k\langle  X\rangle/(P)$, where $P=\{\tilde{r}_1,\tilde{r}_2,\cdots,\tilde{r}_m\}$.

(a) It follows from Corollary \ref{homogenization is CY then PBW deformation is CY} and
 Lemma \ref{nakayama automorphism of gr} immediately.

(b) The first part can be deduced from \cite[Corollary 0.6(1)]{RRZ} and \cite[Theorem 5.3]{RRZ1}. For the completeness, we prove it based on $\mathbb{Z}$-graded twist here.

Consider the graded twisted algebras $B=A^{\mu\!_A}[z]$ and $C=B^{\sigma}$ where $\sigma$ is an automorphism of $B$ such that $\sigma_{|_{A^{\mu\!_A}}}=\mu_A^{-1}$ and $\sigma(z)=z$. Then $C\cong A[z,\mu_A]$.  If the Gorenstein parameter of $A$ is $l$, then the one of $B$ and of $C$ are $l+1$. Let $\mu_C$ be the Nakayama automorphism of $C$. Since $A$ is noetherian AS-regular, $\textsf{hdet} \mu_A=\textsf{hdet} \mu^{-1}_A=1$ by \cite[Theorem 5.3]{RRZ1}. Thus $\textsf{hdet}\sigma=1$ by \cite[Lemma 6.1]{RRZ}. Using the homological identity of \cite[Theorem 5.4(a)]{RRZ}, we have
$$
\mu_B(z)=z, \quad\mu_B(a)=\mu_{A}\circ\mu_A^l\circ\xi^{-1}_{\textsf{hdet}\mu\!_A}(a)=\mu_A^{l+1}(a),
$$
for any $a\in A$. Then
\begin{eqnarray*}
&&\mu_{C}(a)=\mu_{B}\circ\sigma^{l+1}\circ\xi^{-1}_{\textsf{hdet}
\sigma}(a)=\mu^{l+1}_A\circ\mu^{-(l+1)}_A(a)=a,\;\text{ for any }a\in A,\\
&&\mu_{C}(z)=\mu_{B}\circ\sigma^{l+1}\circ\xi^{-1}_{\textsf{hdet}\sigma}(z)=z.
\end{eqnarray*}
Thus, we have $\mu_C=\operatorname{id}_C$ and $A[z,\mu_A]$ is CY.

Now turn to the case of PBW deformation. Let $H$ be the homogenization of $U$, and let $H(D)$ be the homogenization of $D=U[z,\mu_U]$ whose generators are $x_1,x_2,\cdots,x_n,z,t$ with the relations
\begin{eqnarray*}
&\tilde{r}_1^\mathbf{t},\quad\tilde{r}_2^\mathbf{t},\quad\cdots,\quad \tilde{r}_n^\mathbf{t},\\
&x_it-tx_i,\; zt-tz,\; zx_i-\mu_U(x_i)^\mathbf{t}z.
\end{eqnarray*}

Since $\mu_H(x_i)$ has the form $\mu_U(x_i)^\mathbf{t}$ by Theorem \ref{nakayama automorphism of pbw}. So
$H(D)\cong H[z,\mu_H]$. By the last assertion, $H(D)$ is CY. Notice that $D$ is also a PBW deformation of AS-regular algebra $A[z,\mu_A]$, it follows that $D=U[z,\mu_U]$ is CY by Corollary \ref{homogenization is CY then PBW deformation is CY}.
\end{proof}

\section{Hopf actions on PBW deformations}
In this section, we consider finite dimensional Hopf algebras acting on PBW deformations of noetherian AS-regular algebras. Our main idea is transferring this question to their homogenizations. For the facts of Hopf algebras, we refer to \cite{Mo}.

In the sequel, we always assume $X=\{x_1,x_2,\cdots,x_n\}$, $A=k\langle X\rangle/(R)$ is a noetherian AS-regular algebra generated by degree $1$, $U=k\langle X\rangle/(P)$ is a PBW deformation of $A$, $H$ is the homogenization of $U$ and $\mu_A,\mu_U,\mu_H$ are the Nakayama automorphisms of $A,U,H$ respectively. Let
$K$ be a finite dimensional Hopf algebra. There is a standard hypothesis for the rest of this section.

\begin{hypothesis}\label{hopf action on pbw}
We assume that $U$ is a left $K$-module algebra, $K$-action on $U$ preserves the canonical filtration, and $K$ acts on $U$ \emph{inner faithfully}, namely, there is no nonzero  Hopf ideal $I\subset H$ such that $IU=0$.
\end{hypothesis}

We say that a right  $K^o$-coaction on a $K^o$-comodule $T$ is \emph{inner faithful}, if for any proper Hopf subalgebra $K'$ of $K^o$ such that $\rho(T) \not\subset T\otimes K'$. Let $T$ be a left $K$-module, then $K$-action on $T$ is inner faithful if and only if the right $K^o$-coaction on $T$ is inner faithful (\cite[Lemma 1.3(a)]{CWZ}).

\begin{definition}
Let $K$ be a Hopf algebra, and $B$ an algebra. Suppose $K$ acts on $B$, then the \emph{fixed subalgebra} of $B$ defined by
$$
B^K=\{b\in B\,|\, g\cdot b=\epsilon(g)b \,\text{ for any } g\in K \}.
$$
\end{definition}

Under the Hypothesis \ref{hopf action on pbw}, it is clear that there exists a $K$-action on $A$ naturally preserving grading. However, the $K$-action on $A$ may be not inner faithful. There is an additional condition for the $K$-action on $U$ in \cite{CWWZ}, which is automatically satisfied if $K$ is semisimple, to guarantee the inner faithfulness. Our approach is to give a natural $K$-action on $H$ and this action inherits the inner faithfulness.

Suppose $K$-action on $U$ satisfying $g\cdot x_i=\sum_{j=1}^nc^g_{ij}x_j+d^g_i$ where $g\in K$ and $c^g_{ij},d^g_i\in k$. Then define a $K$-action on $H$ by, for any  $g\in K$
$$
g\triangleright x_i:=\sum_{j=1}^nc^g_{ij}x_j+d^g_it,\qquad
g\triangleright t:=\epsilon(g)t.
$$
We distinguish the $K$-action on $U$ and $H$ by $\cdot$ and $\triangleright$. Since $g\triangleright f=(g\cdot f)^\mathbf{t}$ for any homogeneous $f\in k\langle X\rangle$, the $K$-action on $H$ is well-defined. It can be checked straightforward that $K$ acts on $H$ inner faithfully by definition. Conversely, any inner faithful Hopf action on $H$ preserves grading such that $t\in H^K$ can induce an inner faithful Hopf action on $U$ naturally. Hence, the problems of Hopf action on $U$ are equivalent to the problems of Hopf action on $H$ satisfying $t\in H^K$. We say $K$-action on $U$ has a \emph{trivial homological determinant} if $K$-action on $H$ has a trivial homological determinant.

The following result may be considered a version of filtered AS-Gorensteiness of the fixed subalgebras of PBW deformations.
\begin{lemma}
Let $H(U^K)$ be the homogenization of $U^K$. Then $H^K\cong H(U^K)$.
\end{lemma}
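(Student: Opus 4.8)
The plan is to realise $H=H(U)$ concretely as the Rees algebra $\rees_F U=\bigoplus_{i\ge 0}F_iU\cdot t^i$ of $U$ for its canonical filtration; this identification is legitimate precisely because $U$ is a PBW deformation, and under it the degree-$i$ component is $H_i=F_iU\cdot t^i$, with the generator $t$ corresponding to $1\in F_1U$. Since the action was defined on generators by $g\triangleright x_i=\sum_j c^g_{ij}x_j+d^g_it$ and $g\triangleright t=\epsilon(g)t$, on each component it is the filtration lift of the given action on $U$, namely $g\triangleright(u\,t^i)=(g\cdot u)\,t^i$ for $u\in F_iU$. First I would record the three structural facts that drive the whole argument: $\triangleright$ preserves the grading (generators are sent to degree-one elements), $t$ is fixed and hence lies in $H^K$, and $t$ is a central nonzerodivisor, so that $1-t$ is central and regular in $H$, exactly as used in the proof of Theorem \ref{nakayama automorphism of pbw}.

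Because $\triangleright$ is graded, $H^K=\bigoplus_i (H_i)^K=\bigoplus_i(F_iU\cap U^K)\,t^i$ is a graded subalgebra, and this is nothing but the Rees algebra $\rees(U^K)$ of $U^K$ for the filtration $F_i(U^K):=F_iU\cap U^K$ induced from $U$. The heart of the proof is to de-homogenise and recover $U^K$ on the nose. I would use the short exact sequence of left $K$-modules
\[
0\longrightarrow H\xrightarrow{\,1-t\,}H\xrightarrow{\ \delta\ }U\longrightarrow 0,
\]
where $\delta$ is the de-homogenisation $t\mapsto 1$; every map is $K$-linear, since $1-t$ is fixed and central while $\delta$ intertwines $\triangleright$ with $\cdot$ by construction. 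Applying the left-exact invariants functor $(-)^K=\Hom_K(k,-)$ yields $\ker(\delta|_{H^K})=(1-t)H^K$, hence an injection $H^K/(1-t)H^K\hookrightarrow U^K$; surjectivity is explicit, as an invariant $u\in U^K$ with $u\in F_iU$ lifts to the invariant element $u\,t^i\in H_i$ (indeed $g\triangleright(u\,t^i)=(g\cdot u)\,t^i=\epsilon(g)\,u\,t^i$) with $\delta(u\,t^i)=u$. Thus $H^K/(1-t)H^K\cong U^K$, with $t$ central and regular in $H^K$; notably this step uses no semisimplicity of $K$.

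It then remains to recognise $H^K$ --- a graded algebra carrying the central regular degree-one element $t$ and satisfying $H^K/(1-t)\cong U^K$ and $H^K/(t)\cong\gr(U^K)$ --- as the homogenisation $H(U^K)$. There is a tautological graded surjection $H(U^K)\twoheadrightarrow\rees(U^K)=H^K$, and it suffices to check it is an isomorphism modulo the regular central element $t$. This last identification is the main obstacle: it amounts to showing that no relations beyond the homogenised ones survive, equivalently that $U^K$ is itself a PBW deformation, which I would reduce to the comparison $\gr(U^K)\cong(\gr U)^K=A^K$. Proving that passing to $K$-invariants commutes with the associated-graded construction is where the standing hypotheses (characteristic $0$ and $K$ finite dimensional) enter, typically via a filtration-preserving integral/Reynolds operator, and I expect this to require the most care, in contrast with the purely formal de-homogenisation of the previous paragraph.
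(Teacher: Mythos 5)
Your first two paragraphs are, in expanded form, precisely the paper's proof: the paper's entire argument consists of the two observations that $t\in H^K$ and that $g\triangleright f^{\mathbf{t}}=(g\cdot f)^{\mathbf{t}}$, followed by ``others are clear.'' Your identification $H\cong\rees_FU$ (valid for PBW deformations, via the regularity of $t$ from \cite[Theorem 1.3]{CS}), the computation $H^K=\bigoplus_i(F_iU\cap U^K)\,t^i$ from gradedness of $\triangleright$, and the de-homogenisation $H^K/(1-t)H^K\cong U^K$ obtained by applying the left-exact functor $(-)^K$ to $0\to H\xrightarrow{1-t}H\to U\to 0$ are all correct and make rigorous what the paper leaves implicit. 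The divergence is your third paragraph. The ambiguity you flag is genuine --- the paper's notion of homogenisation is presentation-dependent and the statement fixes no presentation of $U^K$ --- but the paper plainly intends $H(U^K)$ in the sense you have already computed: the graded algebra $\bigoplus_i(F_iU\cap U^K)\,t^i$ spanned by the elements $f^{\mathbf{t}}t^j$ with $f\in U^K$. Under that reading your second paragraph finishes the proof and the program of your third paragraph is unnecessary. Be warned, moreover, that the route you sketch for closing that ``gap'' would not run under the standing hypotheses: a filtration-preserving Reynolds operator requires $K$ to be semisimple, whereas Hypothesis \ref{hopf action on pbw} only assumes $K$ finite dimensional, and the inclusion $\gr(U^K)\hookrightarrow(\gr U)^K=A^K$ can indeed fail to be surjective for non-semisimple $K$, since an invariant class in $F_iU/F_{i-1}U$ need not lift to an invariant element of $F_iU$. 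So on the strict presentation-theoretic reading the lemma would require either an added semisimplicity hypothesis or a reformulation; on the intended reading your argument is complete and strictly more informative than the paper's.
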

\begin{proof}
Notice that $t\in H^K$ and $g\triangleright f^\mathbf{t}=(g\cdot f)^\mathbf{t}$ for any $f\in k\langle X\rangle,g\in K$. Others are clear.
\end{proof}

We are interested in the question that when a Hopf algebra $K$ acting on $U$ is semisimple or a group algebra. In \cite{LMZ}, the authors gave lots of conditions to determine what  finite dimensional Hopf algebras that act on noetherian AS-regular algebras are. Here, we have some analogues results to judge the same questions on PBW deformations of noetherian AS-regular algebras.

Let $\{1,v_1,v_2,\cdots,v_n\}$ be a basis of $F_1 U=k\oplus k\{X\}$. There exists a right $K^o$-coaction $\rho_U:U\to U\otimes K^o$ on $U$. Since the $K$-action preserves the filtration, then
$$
\rho_U(v_i)=\sum_{j=1}^n v_j\otimes y_{ji}+1\otimes z_i,\quad\text{ for all }i=1,2,\cdots,n,
$$
for some $y_{ij},z_i\in K^o$. It also induces  $K^o$-coaction on $H$ satisfying
\begin{eqnarray*}
&&\rho_H(v_i^\mathbf{t})=\sum_{j=1}^n v_j^\mathbf{t}\otimes y_{ji}+t\otimes z_i,\quad\text{ for all }i=1,2,\cdots,n,\\
&&\rho_H(t)=t\otimes 1.
\end{eqnarray*}

By coassociation, $\bigtriangleup(y_{ij})=\sum_{s=1}^ny_
{is}\otimes y_{sj}$ and $\bigtriangleup(z_i)=\sum_{s=1}^n z_s\otimes y_{si}+1\otimes z_i$, $\epsilon(y_{ij})=\delta_{ij}$ and $\epsilon(z_i)=0$. The $K^o$-coaction on $U$ and on $H$ are inner faithful, and the set $\{y_{ij},z_i\}$
generates $K$ as Hopf algebra by \cite[Lemma 1.3(b)]{CWZ}.

As the argument above Example \ref{pbw of quantum plane}, we choose the Nakayama automorphism $\mu_U$ of $U$ induced by  the one of homogenization $H$. For convenience,  suppose $\mu_U(x_i)=\sum_{j=1}^n a_{ij}x_j+b_i$ and $\mu_H(x_i)=\sum_{j=1}^n a_{ij}x_j+b_it$  where $a_{ij},b_i\in k$ for any $i=1,\cdots,n$. Set the invertible matrix $M=(a_{ij})$, and
\begin{equation*}
N=\left(
\begin{array}{cc}
1&0\\
\mathrm{\mathbf{b}} & M
\end{array}
\right)
\end{equation*}
where $\mathrm{\mathbf{b}}$ is the column vector $(b_1,b_2,\cdots,b_n)^T$. Let $J_M$ be the Jordan canonical form of $M$.

\subsection{Diagonalizable}Firstly, we consider $J_M$ is a diagonal matrix. Assume that $M$ has no eigenvalue $1$. By Proposition \ref{form of nakayama automorphism}, then $N$ has the Jordan canonical form $\operatorname{diag} (1,J_M)$.

\begin{lemma}\label{diagonal hopf action}
Retain the notations above. Assume $J_M=\operatorname{diag}(\lambda_1,\lambda_2,\cdots,\lambda_n)$ with $\lambda_i\neq 1$ for all $i=1,\cdots,n$. Choose the basis $\{1,v_1,\cdots,v_n\}$ of $F_1U$ such that $\mu_U(v_i)=\lambda_i v_i$ for $i=1,\cdots,n$.  Suppose either
\begin{enumerate}
\item[(1)] $\lambda_i\lambda_j^{-1}$ is not a root of unity for all $i\neq j$ and $\lambda_i$ is not a root of unity for all $i$. or

\item[(2)]$K$ is semisimple, $K$-action on $U$ has trivial homological determinant (or $K^o$-coaction has trivial homological codeterminant) and $\lambda_i\neq \lambda_j$ for all $i\neq j$.
\end{enumerate}
Then $y_{ij}=z_i=0$ for all $i\neq j$. As a consequence, $K$ is a finite dual of a group algebra and $K$ is semisimple. In addition,
\begin{enumerate}
\item If $U$ has a relation of form
$$
r=u_1v_iv_ju_2-s_0u_1u_2+s_1w_1+s_2w_2+\cdots+s_{m}w_{m},
$$
where $s_0,s_1,\cdots,s_m\in k^\times$, $u_1,u_2,w_1,w_2\cdots,w_m\in X^*$ and $m$ is non-negative integer for some  $i\neq j$. Assume that $\{1,w_1,w_2,\cdots,w_{m}\}$ are $k$-linearly independent. Then $y_{ii}=y_{jj}^{-1}$.
\item If $U$ has relations of form
$$
r_{ij}=u^{ij}_1v_iv_ju^{ij}_2-s^{ij}_0u^{ij}_1v_jv_iu^{ij}_2+s^{ij}_1w^{ij}_1+s^{ij}_2w^{ij}_2+\cdots+s^{ij}_{m_{ij}}w^{ij}_{m_{ij}},
$$
where $s^{ij}_0,s^{ij}_1,\cdots,s^{ij}_{m_{ij}}\in k^\times$, $u^{ij}_1,u^{ij}_2,w^{ij}_1,w^{ij}_2\cdots,w^{ij}_{m_{ij}}\in X^*$ and $m_{ij}$ is non-negative integer for all  $i<j$. Assume that $\{u^{ij}_1v_jv_iu^{ij}_2,w^{ij}_1,w^{ij}_2,\cdots,w^{ij}_{m_{ij}}\}$ are $k$-linearly independent for all $i<j$. Then both $K$ and $K^o$ are commutative group algebras.
\end{enumerate}
\end{lemma}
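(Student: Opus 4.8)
The plan is to transport everything to the homogenization $H$, where the coacting data sit on the AS-regular algebra $H$ and can be coupled to its Nakayama automorphism. Since $M$ has no eigenvalue $1$, Proposition~\ref{form of nakayama automorphism}(b) gives $N\sim\operatorname{diag}(1,J_M)$, so on the degree-one space $H_1=\operatorname{span}_k\{v_1^{\mathbf t},\dots,v_n^{\mathbf t},t\}$ the Nakayama automorphism is diagonal: $\mu_H(v_i^{\mathbf t})=\lambda_i v_i^{\mathbf t}$ and $\mu_H(t)=t$. The engine is the coaction form of the homological identity of \cite{LMZ,RRZ} coupling the inner-faithful $K^o$-coaction $\rho_H$ to $\mu_H$: there is an algebra automorphism $\vartheta$ of $K^o$, assembled from $S^2$ and conjugation by the grouplike homological codeterminant $\mathsf D$, with
\begin{equation*}
\rho_H\circ\mu_H=(\mu_H\otimes\vartheta)\circ\rho_H.
\end{equation*}
Because $S^2$ and the grouplike $\mathsf D$ have finite order in the finite dimensional $K^o$ and $S^2$ fixes $\mathsf D$, the automorphism $\vartheta$ has finite order, say $\vartheta^N=\operatorname{id}$.

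Evaluating the identity on $v_i^{\mathbf t}$ and reading off the coefficients of each $v_j^{\mathbf t}$ and of $t$ in $H_1\otimes K^o$ gives
\begin{equation*}
\vartheta(y_{ji})=\tfrac{\lambda_i}{\lambda_j}\,y_{ji}\quad(1\le j\le n),\qquad \vartheta(z_i)=\lambda_i\,z_i.
\end{equation*}
In case (2), Larson--Radford gives $S^2=\operatorname{id}$ (as $K$ is semisimple) and triviality of the homological codeterminant gives $\mathsf D=\epsilon$, so $\vartheta=\operatorname{id}$; the relations collapse to $(\lambda_j-\lambda_i)y_{ji}=0$ and $(1-\lambda_i)z_i=0$, whence $y_{ji}=0$ for $i\neq j$ (as $\lambda_i\neq\lambda_j$) and $z_i=0$ (as $\lambda_i\neq1$). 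In case (1), applying $\vartheta^N=\operatorname{id}$ yields $(\lambda_i/\lambda_j)^N y_{ji}=y_{ji}$ and $\lambda_i^N z_i=z_i$, so a nonzero $y_{ji}$ with $i\neq j$ would make $\lambda_i\lambda_j^{-1}$ a root of unity and a nonzero $z_i$ would make $\lambda_i$ a root of unity, both excluded. Either way $y_{ij}=0$ for $i\neq j$ and $z_i=0$ for all $i$.

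With the off-diagonal $y_{ij}$ and all $z_i$ gone, coassociativity turns $\Delta(y_{ii})=\sum_s y_{is}\otimes y_{si}$ into $y_{ii}\otimes y_{ii}$, and $\epsilon(y_{ii})=1$, so each $y_{ii}$ is grouplike; as the $y_{ij},z_i$ generate $K^o$, we get $K^o\cong kG$ for the finite group $G=\langle y_{11},\dots,y_{nn}\rangle$, hence $K=(kG)^o$ is the finite dual of a group algebra and both are semisimple in characteristic $0$. For (a) and (b) I push the now-diagonal coaction $\rho(v_i)=v_i\otimes y_{ii}$ through the defining relations, which it must annihilate. In (a), writing $u_1v_iv_ju_2=s_0u_1u_2-\sum_k s_kw_k$ and applying $\rho$, the coefficient of the monomial $u_1u_2$ carries the factor $y_{ii}y_{jj}$ on one side and $1$ on the other; the assumed independence of $\{1,w_1,\dots,w_m\}$ keeps the lower terms from contaminating it, and cancelling the invertible product of grouplikes leaves $y_{ii}y_{jj}=1$, i.e. $y_{ii}=y_{jj}^{-1}$. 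In (b), the same comparison on $r_{ij}$ at the monomial $u^{ij}_1v_jv_iu^{ij}_2$ carries $y_{ii}y_{jj}$ against $y_{jj}y_{ii}$, and the stated independence forces $y_{ii}y_{jj}=y_{jj}y_{ii}$ for all $i<j$; thus $G$ is abelian, $K^o\cong kG$ is a commutative group algebra, and its dual $K\cong k\widehat G$ is again one.

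I expect the genuine difficulty to lie in case (1) and in the identity that drives it. In case (2) semisimplicity lets me discard $S^2$ (Larson--Radford) and the codeterminant, so the coaction is literally $\mu_H$-equivariant; in case (1) neither simplification is available, and the argument must keep the $\vartheta$-twisted eigenvalue relations and exploit the finiteness of the order of $\vartheta$ against the root-of-unity hypotheses. Correctly formulating and justifying the coupling identity $\rho_H\mu_H=(\mu_H\otimes\vartheta)\rho_H$ for non-semisimple $K$, with the exact shape of $\vartheta$ in terms of $S^2$ and $\mathsf D$, is the delicate step; the coefficient bookkeeping in (a) and (b) is then routine.
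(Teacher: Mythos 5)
Your proof is correct and follows essentially the same route as the paper: pass to the homogenization $H$, where $\mu_H$ acts diagonally on degree one with eigenvalues $\{1,\lambda_1,\dots,\lambda_n\}$ and $\mu_H(v_i^{\mathbf{t}})=\lambda_i v_i^{\mathbf{t}}$, deduce $y_{ij}=z_i=0$ for $i\neq j$, observe that the $y_{ii}$ are then grouplike and generate $K^o$ by inner faithfulness, and finally push the diagonal coaction through the stated relations to obtain (a) and (b). The only real difference is that where the paper disposes of the vanishing of the off-diagonal $y_{ij}$ and of the $z_i$ by citing \cite[Lemma 2.3(1,2,4,5)]{LMZ}, you re-derive that lemma from the homological identity coupling $\rho_H$ to $\mu_H$ through $S^2$ and the homological codeterminant (using finite order of that twist in case (1), and Larson--Radford plus triviality of the codeterminant in case (2)); this reproduces the content of the cited result rather than replacing it, and you rightly flag the coupling identity itself as the step that still rests on the external reference.
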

\begin{proof}
The corresponding matrix $N$ of $\mu_H$ has eigenvalues $\{1,\lambda_1,\cdots,\lambda_n\}$. Lifting $v_i$ to $H$, then $\mu_H(v_i^\mathbf{t})=\lambda_iv_i^\mathbf{t}$. By \cite[Lemma 2.3(1,2,4,5)]{LMZ}, $y_{ij}=z_i=0$ for all $i\neq j$. So each $y_{ii}$ is a group-like element, and $\rho_U(v_i)=v_i\otimes y_{ii}$ for all $i$.  Since $\{y_{ii}\}_{i=1}^n$ generates $K^o$ as Hopf algebra by inner faithfulness, $K$ is a finite dual of group algebra. As a consequence, $K$ is semisimple.

(a) Assume $u_1=x_{h_1}^{p_1}x_{h_2}^{p_2}\cdots x_{h_l}^{p_l}$ and $u_2=x_{f_1}^{q_1}x_{f_2}^{q_2}\cdots x_{f_{l'}}^{q_{l'}}$. Applying $\rho_U$ to $r$, we have
\begin{align*}
0=\rho_U(r)&=u_1v_iv_ju_2\otimes y_{h_1h_1}^{p_1}y_{h_2h_2}^{p_2}\cdots y_{h_lh_l}^{p_l}y_{ii}y_{jj}y_{f_1f_1}^{q_1}y_{f_2f_2}^{q_2}\cdots y_{f_{l'}f_{l'}}^{q_{l'}}\\
& \qquad-s_0u_1u_2\otimes y_{h_1h_1}^{p_1}y_{h_2h_2}^{p_2}\cdots y_{h_lh_l}^{p_l}y_{f_1f_1}^{q_1}y_{f_2f_2}^{q_2}\cdots y_{f_{l'}f_{l'}}^{q_{l'}}+\cdots\\
&=s_0u_1u_2\otimes y_{h_1h_1}^{p_1}y_{h_2h_2}^{p_2}\cdots y_{h_lh_l}^{p_l}(y_{ii}y_{jj}-1)y_{f_1f_1}^{q_1}y_{f_2f_2}^{q_2}\cdots y_{f_{l'}f_{l'}}^{q_{l'}}\\
&\qquad+s_1w_1\otimes g_1+s_2w_2\otimes g_2+\cdots+s_mw_m\otimes g_m,
\end{align*}
for some $g_1,g_2,\cdots,g_m\in K^o$. By the $k$-linearly independent, $y_{ii}y_{jj}=1$.

(b) Similar to the proof of (a). Apply $\rho_{U}$ to $r_{ij}$ for all $i<j$, then we get $y_{ii}$ commutes with $y_{jj}$. In other words, $K^o$ is commutative and cocommutative. so $K$ is a commutative group algebra.
\end{proof}

As an application, we consider the Hopf actions on PBW deformations of quantum plane. Partial result has been proved in \cite[Corollary 5.8(a)]{CWWZ}.

\begin{corollary}\label{Hopf action on pbw of quantum plane}
Let $U=k\langle x_1,x_2\rangle/(f)$ be a PBW deformation of quantum plane $k_q[x_1,x_2]$ with $q\neq 1$, where $f=x_1x_2-qx_2x_1+ax_1+bx_2+c$. Let $K$ act on $U$ satisfying Hypothesis \ref{hopf action on pbw}. Then
\begin{enumerate}
\item If $q$ is not a root of unity, then $K$ is a commutative group algebra.
\item Suppose $K$ is semisimple and $K$-action on $U$ has trivial homological determinant. If $q\neq -1$, then $K$ is a commutative group algebra.\end{enumerate}

In the setting of \rm{(a)} or \rm{(b)}, if further $c\neq abq(1-q)^{-1}$, we have $K\cong k\mathbb{Z}_m$ for some $m$.
\end{corollary}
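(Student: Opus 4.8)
The plan is to apply Lemma~\ref{diagonal hopf action} to the specific algebra $U=k\langle x_1,x_2\rangle/(f)$. First I would record the relevant data for the quantum plane $k_q[x_1,x_2]$: its Nakayama automorphism sends $x_1\mapsto qx_1$ and $x_2\mapsto q^{-1}x_2$, so the matrix $M=\operatorname{diag}(q,q^{-1})$ with eigenvalues $\lambda_1=q$ and $\lambda_2=q^{-1}$, both different from $1$ precisely because $q\neq 1$. By Example~\ref{pbw of quantum plane} (or directly by Lemma~\ref{nakayama automorphism of central} and Theorem~\ref{nakayama automorphism of pbw}), there is a basis $\{1,v_1,v_2\}$ of $F_1U$ with $\mu_U(v_i)=\lambda_i v_i$, after a harmless linear change of generators. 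The two cases (a) and (b) of the corollary then match the two hypotheses (1) and (2) of Lemma~\ref{diagonal hopf action}.

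For part (a), when $q$ is not a root of unity, I would check that $\lambda_1\lambda_2^{-1}=q^2$ and $\lambda_2\lambda_1^{-1}=q^{-2}$ are not roots of unity and neither $\lambda_1=q$ nor $\lambda_2=q^{-1}$ is a root of unity; this is exactly hypothesis~(1). For part (b), when $K$ is semisimple with trivial homological determinant and $q\neq-1$, I note $\lambda_1=q\neq q^{-1}=\lambda_2$ (which holds since $q^2\neq 1$, i.e. $q\neq\pm1$), matching hypothesis~(2). In either case the lemma gives $y_{ij}=z_i=0$ for $i\neq j$ and tells us $K$ is the finite dual of a group algebra. To upgrade this to $K$ being a \emph{commutative} group algebra I would invoke part~(b) of the lemma, so I must exhibit a relation of the required commutator form. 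The single defining relation, rewritten in the $v_i$-basis, furnishes a relation of type $r_{12}=v_1v_2-s_0v_2v_1+(\text{lower terms})$ with $s_0=q$ and the linear-independence hypothesis satisfied; applying $\rho_U$ forces $y_{11}$ and $y_{22}$ to commute, so $K^o$ is commutative and cocommutative, whence $K$ is a commutative group algebra.

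For the final clause, I would pin down the group. Since $K$ is a commutative group algebra $kG$, its finite dual corresponds to the abelian group $G$, and $\rho_U(v_i)=v_i\otimes y_{ii}$ with each $y_{ii}$ group-like. The crucial constraint comes from forcing $\rho_U$ to be compatible with the \emph{full} (inhomogeneous) relation $f$, not merely its leading form: applying $\rho_U$ to $f$ and using $y_{11}y_{22}=1$ together with the coaction on the lower-degree terms $ax_1$, $bx_2$, $c$ will impose that $y_{11}$ (equivalently the generator of $G$) has finite order, and the condition $c\neq abq(1-q)^{-1}$ is exactly what rules out the degenerate possibility that the coaction collapses, forcing $G$ to be a single cyclic group $\mathbb{Z}_m$. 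Thus $K\cong k\mathbb{Z}_m$.

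The main obstacle I anticipate is the last step: extracting the precise cyclic structure from the inhomogeneous part of $f$. The homogeneous analysis via Lemma~\ref{diagonal hopf action} is clean, but the lower-order terms $ax_1+bx_2+c$ interact with the coaction in a way that is not visible on the associated graded level, and it is exactly here that the arithmetic condition $c\neq abq(1-q)^{-1}$ enters. I expect that translating $f$ into the eigenbasis $\{v_1,v_2\}$ will change the coefficients $a,b,c$ into new constants, and the nondegeneracy condition guarantees that at least one genuinely inhomogeneous term survives, pinning the group-like element $y_{11}$ to a root of unity of some order $m$ and thereby identifying $G\cong\mathbb{Z}_m$. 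Verifying that this is consistent—and that no further relations among group-likes are forced—will require careful bookkeeping of how the change of basis affects the constant and linear terms of $f$.
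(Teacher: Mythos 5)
Your treatment of (a) and (b) is correct and follows the paper's route: diagonalize $\mu_U$ with eigenvalues $q,q^{-1}$ (both $\neq 1$), check hypotheses (1) resp.\ (2) of Lemma~\ref{diagonal hopf action}, and use part (b) of that lemma on the single relation, which in the eigenbasis has leading form $v_1v_2-qv_2v_1$, to conclude that $K$ is a commutative group algebra. The paper does exactly this, except that it first normalizes $U$ up to isomorphism to be either $k_q[x_1,x_2]$ or $k\langle x_1,x_2\rangle/(x_1x_2-qx_2x_1-1)$; that normalization is precisely your change to the eigenbasis carried out once and for all, with $c\neq abq(1-q)^{-1}$ being the condition that the constant term survives the substitution.

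The gap is in the final clause. The correct use of the surviving constant term is to apply Lemma~\ref{diagonal hopf action}(a) with $u_1=u_2=1$, so that the constant plays the role of $-s_0u_1u_2$ and $w_1=v_2v_1$ (plus any remaining linear monomials): this yields $y_{11}y_{22}=1$, hence $K^o$ is generated by the single group-like $y_{11}$, the group is cyclic, and $K\cong k\mathbb{Z}_m$. Your write-up instead takes $y_{11}y_{22}=1$ as already available and argues that the inhomogeneous terms force $y_{11}$ to have finite order. That is aiming at the wrong target: finite order of a group-like in $K^o$ is automatic from the finite-dimensionality of $K$, while without the relation $y_{22}=y_{11}^{-1}$ the group $\langle y_{11},y_{22}\rangle$ could be $\mathbb{Z}_{m_1}\times\mathbb{Z}_{m_2}$ and need not be cyclic. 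Likewise, ``rules out the degenerate possibility that the coaction collapses'' points the wrong way: the hypothesis on $c$ is what makes $K$ smaller (cyclic), not what prevents it from degenerating. So the mechanism you need for the last sentence is exactly Lemma~\ref{diagonal hopf action}(a), which your proposal never actually invokes.
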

\begin{proof}
Since $q\neq1$, $U$ is isomorphic to
$$
k\langle x_1,x_2\rangle/(x_1x_2-qx_2x_1-1),\quad \text{ or } \; k_q[x_1,x_2].
$$
So $\mu_U(x_1)=qx_1$ and $\mu_U=q^{-1}x_2$ by Example \ref{pbw of quantum plane}. Now (a) and (b) are obtained from Lemma \ref{diagonal hopf action} immediately. The right $K^o$-coaction on $U$ should be
$$
\rho_U(x_1)=x_1\otimes y_{11},\quad \rho_U(x_2)=x_2\otimes y_{22},
$$
for some commuting elements $y_{11},y_{22}\in K^o$.

If $c\neq abq(1-q)^{-1}$, then $U\cong k\langle x_1,x_2\rangle/(x_1x_2-qx_2x_1-1)$. By Lemma \ref{diagonal hopf action}(b), we have $y_{11}=y_{22}^{-1}$. So $K^o=kG$, where $G$ is a group generated by $y_{11}$. Then $K$ is also a group algebra of cyclic group. More precisely,  $g\cdot x_1=\xi x_1,\; g\cdot x_2=\xi^{-1} x_2$ where $\xi$ is a root of unity, if $K=k\langle g\rangle$.
\end{proof}

\begin{corollary}\label{Hopf action on pbw of down-up}
Let $K$ act on Down-Up algebra $A(\alpha,\beta,\gamma)$ where $\beta\neq 0$ satisfying Hypothesis \ref{hopf action on pbw}.
\begin{enumerate}
\item If $\beta$ is not a root of unity, then $K$ is a dual of a group algebra.
\item If $\alpha\neq0$ and $\beta$ is not a root of unity, then $K$ is a commutative group algebra.
\item If $\gamma\neq0$ and $\beta$ is not a root of unity, then $K\cong k\mathbb{Z}_m$ for some $m$.
\item Suppose $K$ is semisimple and $K$-action on $A(\alpha,\beta,\gamma)$ has trivial homological determinant. If $\beta\neq \pm1$, then $K$ is a dual of a group algebra. In addition, \begin{enumerate}
\item[(\romannumeral1)]if $\alpha\neq0$, then $K$ is a commutative group algebra.
\item[(\romannumeral2)]if $\gamma\neq 0$, then $K\cong k\mathbb{Z}_m$ for some $m$.
\end{enumerate}
\end{enumerate}
\end{corollary}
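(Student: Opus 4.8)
The plan is to extract the eigendata of the Nakayama automorphism and then feed the defining relations into Lemma \ref{diagonal hopf action}. By Example \ref{down-up}, with the Nakayama automorphism chosen as in Theorem \ref{nakayama automorphism of pbw}, we have $\mu_U(x_1)=-\beta x_1$ and $\mu_U(x_2)=-\beta^{-1}x_2$. Since the matrix $M=(a_{ij})$ is already diagonal, I would take $v_1=x_1$ and $v_2=x_2$ as the eigenbasis of $F_1U$, with eigenvalues $\lambda_1=-\beta$ and $\lambda_2=-\beta^{-1}$, so that $\lambda_1\lambda_2^{-1}=\beta^2$. Every assertion of the corollary is then a specialization of Lemma \ref{diagonal hopf action} for $n=2$.

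The first step is to match the hypotheses with conditions (1) and (2) of that lemma. If $\beta$ is not a root of unity, then none of $-\beta$, $-\beta^{-1}$, $\beta^{2}$ is a root of unity (each such relation forces $\beta^{2m}=1$), and $\lambda_i\neq 1$; this is precisely condition (1), which already gives $y_{ij}=z_i=0$ for $i\neq j$ and hence part (a), that $K$ is a finite dual of a group algebra. For part (d), the hypothesis $\beta\neq\pm1$ is exactly $\beta^2\neq1$, i.e. $\lambda_1\neq\lambda_2$ together with $\lambda_i\neq 1$; combined with semisimplicity and triviality of the homological determinant, this is condition (2), again yielding that $K$ is a finite dual of a group algebra.

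The remaining clauses require exhibiting $f_1$ in the two special shapes of the lemma. Taking $u_1=x_1$, $u_2=1$ and using $u_1v_1v_2u_2=x_1^2x_2$, $u_1v_2v_1u_2=x_1x_2x_1$, $u_1u_2=x_1$, I would write
$$
f_1=u_1v_1v_2u_2-\alpha\,u_1v_2v_1u_2-\beta x_2x_1^2-\gamma x_1
$$
to realize the shape of clause (b) with $s_0=\alpha$ when $\alpha\neq0$, and
$$
f_1=u_1v_1v_2u_2-\gamma\,u_1u_2-\alpha x_1x_2x_1-\beta x_2x_1^2
$$
to realize the shape of clause (a) with $s_0=\gamma$ when $\gamma\neq0$. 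The required $k$-linear independence is checked against the standard-monomial basis of $U$: since $A(\alpha,\beta,0)$ is AS-regular with $\LH(f_1)=-\beta x_2x_1^2$ and $\LH(f_2)=-\beta x_2^2x_1$, the words avoiding $x_2x_1^2$ and $x_2^2x_1$ form a $k$-basis, and reducing the single reducible word $x_2x_1^2$ via $f_1=0$ shows that $\{x_1x_2x_1,x_2x_1^2,x_1\}$ and $\{1,x_1x_2x_1,x_2x_1^2\}$ are each independent (here $\beta\neq0$ is used). Then clause (b) gives that $K$ and $K^o$ are commutative group algebras, which is part (b) and addendum (i) of part (d); and clause (a) gives $y_{11}=y_{22}^{-1}$.

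Finally, $y_{11}=y_{22}^{-1}$ forces the group $G$ generated by the group-like elements $y_{11},y_{22}$ to be the cyclic group $\langle y_{11}\rangle$, so $K^o=kG\cong k\mathbb{Z}_m$ and hence $K\cong k\mathbb{Z}_m$ by self-duality of cyclic group algebras; this settles part (c) and addendum (ii) of part (d). I expect the only genuinely technical point to be the verification of the linear-independence hypotheses of Lemma \ref{diagonal hopf action}(a,b), which is exactly where the PBW/Gr\"obner structure of the down-up algebra enters; the rest is bookkeeping of the eigenvalues $-\beta,-\beta^{-1}$ against the root-of-unity conditions.
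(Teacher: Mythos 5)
Your proposal is correct and follows essentially the same route as the paper: identify the eigenvalues $-\beta,-\beta^{-1}$ of $\mu_U$ from Example \ref{down-up}, check the root-of-unity (resp.\ $\beta\neq\pm1$) conditions against hypotheses (1) and (2) of Lemma \ref{diagonal hopf action}, and then feed the relation $f_1$ into clauses (a) and (b) of that lemma to get commutativity and $y_{11}=y_{22}^{-1}$. The paper's proof is just a terser version of this; your explicit verification of the linear-independence hypotheses via the standard-monomial basis (using $\beta\neq 0$) is a detail the paper leaves implicit.
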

\begin{proof}
Let $U=A(\alpha,\beta,\gamma)$. By Example \ref{down-up}, we have that $U$ is a PBW deformation of $A=A(\alpha,\beta,0)$ and $\mu_U(x)=-\beta x_1,\mu_U(x_2)=-\beta^{-1}x_2$. Now (a) is obtained from Lemma \ref{diagonal hopf action} immediately. The right $K^o$-coaction should be
$$
\rho_U(x_1)=x_1\otimes y_{11},\quad \rho_U(x_2)=x_2\otimes y_{22},
$$
for some elements $y_{11},y_{22}\in K^o$.

(b, c) If $\alpha\neq 0$, there exists a relation of $R$ satisfying Lemma \ref{diagonal hopf action}(b). So $K$ is a commutative group algebra.

If $\gamma\neq0$, then Lemma \ref{diagonal hopf action}(a) applies. Thus $K^o$ is a cyclic group algebra, and $K\cong k\mathbb{Z}_m$.  More precisely,  $g\cdot x_1=\xi x_1,\; g\cdot x_2=\xi^{-1} x_2$ where $\xi$ is a root of unity, if $K=k\langle g\rangle$.

(d) It is similar to (a, b, c) by Lemma \ref{diagonal hopf action}.
\end{proof}

\subsection{Non-diagonalizable}Next, we consider the non-diagonalizable case of $J_M$.
\begin{lemma}\label{non-diagonal hopf action}
Suppose $A$ is generated by two generators $\{x_1,x_2\}$.
\begin{enumerate}
\item[(1)] The Nakayama automorphism $\mu_U$ satisfies
$
x_1\mapsto \lambda x_1+b_1,\; x_2\mapsto b_2+ax_1+\lambda x_2,
$
for some $\lambda,a\in k^\times$, $b_1,b_2\in k$ and $\lambda\neq 1$. Suppose $K$ is semisimple and $K$-action on $U$ has trivial homological determinant.  Then $K\cong k\mathbb{Z}_m$ for some $m$.

\item[(2)] The Nakayama automorphism $\mu_U$ satisfies
$
x_1\mapsto x_1+b_1,\; x_2\mapsto b_2+ax_1+x_2
$
for some $a,b_1\in k^\times$ and $b_2\in k$. Then $K$ is trivial ($K\cong k$).
\end{enumerate}
\end{lemma}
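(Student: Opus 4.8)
The plan is to transport the whole question to the homogenization $H$, where the Nakayama automorphism has already been computed in Theorem \ref{nakayama automorphism of pbw}, and then to feed the Jordan structure of $\mu_H$ into the coaction machinery of \cite{LMZ} exactly as was done for the diagonalizable case in Lemma \ref{diagonal hopf action}. Recall that the $K$-action on $U$ lifts to an inner faithful, grading-preserving $K$-action on $H$ with $t\in H^K$, that $H$ is noetherian AS-regular, and that a trivial homological determinant for the action on $U$ is, by definition, a trivial one for the action on $H$. On $H_1=kx_1\oplus kx_2\oplus kt$ the automorphism $\mu_H$ is represented by the matrix $N$; in case (1) its eigenvalues are $1,\lambda,\lambda$ with the two $\lambda$'s forming a single Jordan block (since $a\neq0$ makes $M$ a non-trivial $2\times2$ block), while in case (2) $N$ is a single unipotent Jordan block of size $3$ (since $a,b_1\neq0$ forces $(N-I)^2\neq0$).

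For part (1), since $\lambda\neq1$ the eigenvalue $1$ coming from $t$ is isolated from the $\lambda$-block, so $H_1$ splits as $kt\oplus E_\lambda$ with $E_\lambda$ the two-dimensional generalized $\lambda$-eigenspace. First I would pick a Jordan basis $v_1,v_2$ of $E_\lambda$, so that $\mu_H(v_1)=\lambda v_1$ and $\mu_H(v_2)=v_1+\lambda v_2$, and lift it to $v_1^{\mathbf{t}},v_2^{\mathbf{t}}\in H$. The homological identity relating $\mu_H$ to the $K^o$-coaction (as in \cite{RRZ,LMZ}), together with the triviality of the homological determinant, forces the coaction to be compatible with this Jordan chain; applying the non-diagonalizable parts of \cite[Lemma 2.3]{LMZ} then shows that $v_1^{\mathbf{t}}$ coacts by a single group-like element and that the off-diagonal coaction coefficients vanish. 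Because $K$ is semisimple and, by inner faithfulness, the coaction coefficients generate $K^o$, the whole chain coacts through one group-like element, so $K^o$ is the group algebra of a cyclic group and $K\cong k\mathbb{Z}_m$ for some $m$.

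For part (2) no semisimplicity is available, so the argument must be purely structural. Here $\lambda=1$, so the eigenvalue of $t$ coincides with that of the generator block and $\mu_H$ is a single size-$3$ unipotent Jordan block on $H_1$, with $t$ sitting at the base of the chain and two generalized eigenvectors spanned by $x_1,x_2$. Lifting this chain to $H$ and running the non-diagonalizable estimate of \cite{LMZ}, a single Jordan block of size $3$ leaves no room for any nontrivial group-like or off-diagonal coaction data, so the induced $K^o$-coaction on $H$ is forced to be trivial. By inner faithfulness the coaction coefficients generate $K^o$, and triviality of the coaction then collapses $K^o$, hence $K$, to $k$.

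The main obstacle in both parts is establishing the precise compatibility between the Jordan structure of $\mu_H$ and the $K^o$-coaction, namely invoking the homological identity of \cite{RRZ,LMZ} in the non-diagonalizable regime and translating ``a single Jordan block'' into vanishing statements for the coaction coefficients. The size-$3$ block of (2), where the $t$-eigenvalue is entangled with the generator block, is the most delicate point, and is exactly where the absence of a semisimplicity hypothesis must be compensated by the rigidity of the unipotent Jordan form.
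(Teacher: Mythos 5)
Your proposal is correct and follows essentially the same route as the paper: pass to the homogenization $H$, read off the Jordan form of $\mu_H$ on $\{t,x_1,x_2\}$ (an isolated eigenvalue $1$ plus a size-$2$ block for $\lambda\neq 1$ in case (1), a single size-$3$ unipotent block in case (2)), and invoke the non-diagonalizable coaction lemmas of \cite{LMZ} — the paper cites \cite[Lemma 3.4(2)]{LMZ} and \cite[Lemma 3.4(3)]{LMZ} respectively — to force $y_{ij}=z_i=0$ for $i\neq j$ with $y_{11}=y_{22}$ group-like in case (1) and $y_{11}=y_{22}=1$ in case (2), whence $K\cong k\mathbb{Z}_m$ or $K\cong k$ by inner faithfulness.
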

\begin{proof}
(1) By Proposition \ref{form of nakayama automorphism} and $\lambda\neq1$, there exists a basis $\{1,v_1,v_2\}$ of $F_1U$ such that $\mu_U(v_1)=\lambda v_1$ and $\mu_U(v_2)=\lambda v_2+v_1$.

Let $H$ be the homogenization of $U$. The matrix corresponding to the Nakayama automorphism $\mu_H$ of $H$ with respect to the basis $\{t,v_1^\mathbf{t},v_2^\mathbf{t}\}$ is
$$N=\left(\begin{array}{ccc}1&0&0\\0&\lambda&0\\0&1&\lambda\end{array}\right).$$

If $K$ is semisimple and $K$-action has trivial homological determinant, the conditions of \cite[Lemma 3.4(2)]{LMZ} are satisfied. So $y_{ij}=z_i=0$ for all $i\neq j$ and $y_{11}=y_{22}$. Then $K\cong \mathbb{Z}_m$.

More precisely, $g\cdot v_1=\xi v_1,\; g\cdot v_2=\xi v_2$ where $\xi$ is a root of unity, if $K=k\langle g\rangle$.

(2) There exists a basis $\{1,v_1,v_2\}$ of $F_1U$ such that $\mu_U(v_1)=v_1+c$ and $\mu_U(v_2)=v_2+cv_1+c^2$ for some $c\in k^\times$. Now consider the $K$-action on homogenization of $U$. By \cite[Lemma 3.4(3)]{LMZ}, $y_{ij}=z_i=0$ for all $i\neq j$ and $y_{11}=y_{22}=1$. Thus $K\cong k$.
\end{proof}
\begin{corollary}
Let $U_1$ and $U_2$ be ones defined in Example \ref{pbw of hpbw of down-up} and let finite dimensional Hopf algebras $K_1$ and $K_2$ act on $U_1$ and $U_2$ satisfying Hypothesis \ref{hopf action on pbw} respectively. Suppose $K_1$ is semisimple and $K_1$-action on $U_1$ has trivial homological determinant. Then $K_1\cong k\mathbb{Z}_m$ for some $m$ and $K_2$ is trivial.
\end{corollary}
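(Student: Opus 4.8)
The plan is to read off the two Nakayama automorphisms recorded in Example \ref{pbw of hpbw of down-up} and match each to the appropriate case of Lemma \ref{non-diagonal hopf action}, whose hypotheses were tailored exactly to these single-Jordan-block situations. No new computation is needed beyond checking that the normalizations line up.

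First I would handle $U_1$. From Example \ref{pbw of hpbw of down-up} the Nakayama automorphism $\mu_{U_1}$ sends $x_1\mapsto -x_1-x_2-1$ and $x_2\mapsto -x_2-1$, so on the generators it is given by the matrix $\left(\begin{smallmatrix}-1&-1\\0&-1\end{smallmatrix}\right)$ together with the constant column $(-1,-1)^T$. Interchanging the roles of $x_1$ and $x_2$ (a harmless relabeling of generators that does not alter the Jordan type) puts this in the shape $x_1\mapsto\lambda x_1+b_1$, $x_2\mapsto b_2+ax_1+\lambda x_2$ of Lemma \ref{non-diagonal hopf action}(1) with $\lambda=a=b_1=b_2=-1$; in particular $\lambda,a\in k^\times$ and $\lambda=-1\neq1$, so $M$ has the single nontrivial Jordan block the lemma requires. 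Since the corollary assumes $K_1$ is semisimple and its action on $U_1$ has trivial homological determinant, Lemma \ref{non-diagonal hopf action}(1) applies verbatim and yields $K_1\cong k\mathbb{Z}_m$ for some $m$.

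Next I would handle $U_2$. Here $\mu_{U_2}$ sends $x_1\mapsto x_1+x_2+1$ and $x_2\mapsto x_2+1$, represented by $\left(\begin{smallmatrix}1&1\\0&1\end{smallmatrix}\right)$ with constant column $(1,1)^T$; every eigenvalue equals $1$. Swapping $x_1$ and $x_2$ again brings this to the form $x_1\mapsto x_1+b_1$, $x_2\mapsto b_2+ax_1+x_2$ of Lemma \ref{non-diagonal hopf action}(2) with $a=b_1=b_2=1$, so $a,b_1\in k^\times$. As in the proof of that lemma (which rests on Proposition \ref{form of nakayama automorphism}), this is precisely the condition that the homogenized matrix $N$ be a single $3\times3$ Jordan block with eigenvalue $1$; one checks directly that $(N-I)^2\neq0$ while $(N-I)^3=0$. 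Because Lemma \ref{non-diagonal hopf action}(2) carries no semisimplicity or homological-determinant hypothesis, it applies at once and forces $K_2\cong k$, i.e.\ $K_2$ is trivial.

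The only subtlety is bookkeeping: Lemma \ref{non-diagonal hopf action} normalizes the off-diagonal entry of $M$ to lie below the diagonal, whereas the automorphisms in Example \ref{pbw of hpbw of down-up} are written with it above. This is dissolved by the symmetric relabeling $x_1\leftrightarrow x_2$, which preserves the Jordan type and therefore leaves every hypothesis and conclusion of Lemma \ref{non-diagonal hopf action} intact; so I do not anticipate a genuine obstacle.
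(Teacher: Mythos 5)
Your proposal is correct and follows exactly the route the paper intends: the paper's own proof is the one-line "It follows from Examples \ref{pbw of hpbw of down-up} and Lemma \ref{non-diagonal hopf action}," and you have simply supplied the bookkeeping (reading off $\mu_{U_1}$, $\mu_{U_2}$, relabeling generators to match the normal forms in Lemma \ref{non-diagonal hopf action}(1) and (2), and checking $\lambda=-1\neq 1$ resp.\ $a,b_1\in k^\times$). The relabeling remark is a sensible precaution and does not change the argument.
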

\begin{proof}
It follows from Examples \ref{pbw of hpbw of down-up} and Lemma \ref{non-diagonal hopf action}.
\end{proof}

\vskip 7mm
{\it Acknowledgments.}  This research is supported by the NSFC
(Grant No. 11271319). We thank the reviewers for careful reading and valuable suggestions.

\vskip10mm

\end{document}